\theoremstyle{plain}
\newtheorem{Thm}{Theorem}[section] 
\newtheorem{Lem}[Thm]{Lemma} 
\newtheorem{Cor}[Thm]{Corollary} 
\theoremstyle{definition}
\newtheorem{Def}[Thm]{Definition}
\newtheorem{Rem}[Thm]{Remark}
\numberwithin{equation}{section} 
\newcommand{\Dep}{\mathrm{Dep}}
\newcommand{\KK}{{\mathbb K}}
\newcommand{\Cid}{\mathrm{Clg}}
\newcommand{\ZZ}{{\mathbb{Z}}}
\newcommand{\NN}{{\mathbb{N}}}
\newcommand{\FF}{{\mathbb{F}}}
\DeclareMathAlphabet\mathbfsl {T1}{cmr}{bx}{it}
\begin{document}
	
	\title[Closed sets of functions]{Closed sets of finitary functions between products of finite fields of coprime order}
	\author{Stefano Fioravanti}
	
	\subjclass{08A40}
	
	\address{
		Institut f\"ur Algebra,
		Johannes Kepler Universit\"at Linz,
		4040 Linz,
		Austria}
	\email{\tt stefano.fioravanti66@gmail.com}

	\thanks{The research was supported by the Austrian Science Fund (FWF):P29931.}
	
	\keywords{Clonoids, Clones}

	\begin{abstract}
		
		We investigate the finitary functions from a finite product of finite fields $\prod_{j =1}^m\mathbb{F}_{q_j} = \KK$ to a finite product of finite fields $\prod_{i =1}^n\mathbb{F}_{p_i} = \FF$, where $|\KK|$ and $|\FF|$ are coprime. An $(\FF,\KK)$-linearly closed clonoid is a subset of these functions which is closed under composition from the right and from the left with linear mappings. 
		
		We give a characterization of these subsets of functions through the $\FF_p[\KK^{\times}]$-submodules of $\mathbb{F}_p^{\KK}$, where $\KK^{\times}$ is the multiplicative monoid of $\KK = \prod_{i=1}^m \FF_{q_i}$. Furthermore we prove that each of these subsets of functions is generated by a set of unary functions and we provide an upper bound for the number of distinct $(\FF,\KK)$-linearly closed clonoids.
		
	\end{abstract}
	
	\maketitle
	
	\section{Introduction}
	
	Since P. Hall's abstract definition of a clone the problem to describe sets of finitary functions from a set $A$ to a set $B$  which satisfy some closure properties has been a fruitful branch of research. E. Post's characterization of all clones on a two-element set \cite{Pos.TTVI} can be considered as a foundational result in this field, which was developed further, e. g., in \cite{Ros.MCOA,PK.FUR,Sze.CIUA,Leh.CCOF}. Starting from \cite{BJK.TCOC}, clones are used to study the complexity of certain constrain satisfaction problems (CSPs).
	
	The aim of this paper is to describe sets of functions from a finite product of finite fields $\prod_{j =1}^m\mathbb{F}_{q_j} = \KK$ to a finite product of finite fields $\prod_{i =1}^n\mathbb{F}_{p_i} = \FF$, where $|\KK|$ and $|\FF|$ are coprime. The sets of functions we are interested in are closed under composition from the left and from the right with linear mappings. Thus we consider sets of functions with different domains and codomains; such sets are called clonoids and are investigated, e. g., in \cite{AM.FGEC}. Let $\mathbf{B}$ be an algebra, and let $A$ be a non-empty set. For a subset $C$ of $\bigcup_{n \in \NN} B^{A^n}$ and $k\in \NN$, we let $C^{[k]} :=C \cap B^{A^k}$. According to Definition $4.1$ of \cite{AM.FGEC} we call $C$ a \emph{clonoid} with source set $A$ and target algebra $\mathbf{B}$ if
	\begin{center}
		\begin{enumerate}
			\item [(1)] for all $k \in \NN$: $C^{[k]}$ is a subuniverse of $\mathbf{B}^{A^k}$, and
			\item [(2)] for all $k,n \in \NN$, for all $(i_1,\dots,i_k) \in \{1,\dots,n\}^k$, and for all $c \in C^{[k]}$, the function $c' \colon A^n \to B$ with $c'(a_1,\dots,a_n) := c(a_{i_1},\dots,a_{i_k})$ lies in $C^{[n]}$.
		\end{enumerate}
	\end{center}
	
	By $(1)$ every clonoid is closed under composition with operations of $\mathbf{B}$ on the left. In particular we are dealing with those clonoids whose target algebra is the ring $\prod_{i=1}^m\mathbb{F}_{p_i}$ that are closed under composition with linear mappings from the right side.
	
	\begin{Def}
		\theoremstyle{definition}
		\label{DefClo-2}
		Let $m,s \in \NN$  and let $\KK= \prod_{j=1}^m\mathbb{K}_{j}$, $\FF= \prod_{i=1}^s\mathbb{F}_{i}$ be products of fields. An \emph{$(\FF,\KK)$-linearly closed clonoid} is a non-empty subset $C$ of $\bigcup_{k \in \mathbb{N}} \prod_{i=1}^s\mathbb{F}_{i}^{{\prod_{j=1}^m\mathbb{K}_{j}^k}}$ with the following properties:
		
		\begin{enumerate}
			\item[(1)] for all $n \in \NN$, $\mathbfsl{a}, \mathbfsl{b} \in \prod_{i=1}^s\mathbb{F}_i$, and $f,g \in C^{[n]}$:
			
			\begin{equation*}
				\mathbfsl{a}f + \mathbfsl{b}g \in C^{[n]};
			\end{equation*}
			
			\item[(2)] for all $l,n \in \NN$, $f \in C^{[n]}$, $(\mathbfsl{x}_1,\dots,\mathbfsl{x}_m) \in \prod_{j=1}^m\mathbb{K}_{j}^l$, and $A_j\in \mathbb{K}^{n \times l}_{j}$:
			
			\begin{equation*}
				g\colon (\mathbfsl{x}_1,\dots,\mathbfsl{x}_m) \mapsto f(A_1\cdot \mathbfsl{x}_1^t,\cdots,A_m\cdot \mathbfsl{x}_m^t) \text{ is in } C^{[l]},
			\end{equation*}
			
		\end{enumerate}
		where the juxtaposition $\mathbfsl{a}f$ denotes the Hadamard product of the two vectors (i.e. the component-wise product $(a_1,\dots,a_n)\cdot (b_1,\dots,b_n) = (a_1b_1,\dots,$ $a_nb_n)$).
	\end{Def}
	
	Clonoids naturally appear in the study of promise constraint satisfaction problems (PCSPs). These problems are investigated,  e. g., in \cite{BG.PCSS}, and in \cite{BKO.AATP} clonoid theory has been used to provide an algebraic approach to PCSPs. In \cite{Spa.OTNO} A. Sparks investigate the number of clonoids for a finite set $A$ and finite algebra $\mathbf{B}$ closed under the operations of $\mathbf{B}$. In \cite{Kre.CFSO} S. Kreinecker characterized linearly closed clonoids on $\mathbb{Z}_p$, where $p$ is a prime. Furthermore, a description of the set of all $(\FF,\KK)$-linearly closed clonoids is a useful tool to investigate (polynomial) clones on $\ZZ_n$, where $n$ is a product of distinct primes or to represent polynomial functions of semidirect products of groups. 
	
	In \cite{Fio.CSOF} there is a complete description of the structure of all $(\FF,\KK)$-linearly closed clonoids in case $\FF$ and $\KK$ are fields and the results we will present are a generalization of this description.
	
	The main result of this paper (Theorem \ref{Thm14-2}) states that every $(\FF,\KK)$-linearly closed clonoid is generated by its subset of unary functions.
	
	\begin{Thm}
		\label{Thm14-2}
		Let $\KK = \prod_{i=1}^m\mathbb{F}_{q_i}$, $\FF= \prod_{i=1}^s\mathbb{F}_{p_i}$ be products of fields such that $|\KK|$ and $|\FF|$ are coprime. Then every $(\mathbb{F},\mathbb{K})$-linearly closed clonoid is generated by a set of unary functions and thus there are finitely many distinct $(\FF,\KK)$-linearly closed clonoids.
	\end{Thm}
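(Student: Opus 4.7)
The plan is to derive both assertions of Theorem~\ref{Thm14-2} from a single structural fact: the map $C \mapsto C^{[1]}$ from $(\FF,\KK)$-linearly closed clonoids to subsets of $\FF^{\KK}$ is injective. Once this is established, each clonoid $C$ equals the clonoid generated by its unary part $C^{[1]}$, and since $\FF^{\KK}$ is a finite set there can be only finitely many $(\FF,\KK)$-linearly closed clonoids in total.

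A preliminary reduction brings us to the case $\FF = \mathbb{F}_p$. Let $e_i \in \FF$ be the primitive idempotent of the $i$-th factor. Hadamard multiplication by $e_i$, permitted by Definition~\ref{DefClo-2}(1), sends $C$ to the set $e_i C$ of functions taking values in the $i$-th summand, and every $f \in C$ is recovered as $\sum_i e_i f$. After identifying $e_i\FF$ with $\mathbb{F}_{p_i}$, each $e_i C$ is an $(\mathbb{F}_{p_i},\KK)$-linearly closed clonoid, so it suffices to treat each component, i.e.\ to prove the theorem when the target is a single prime field. In this reduced setting one checks directly that $U := C^{[1]}$ is an $\mathbb{F}_p[\KK^{\times}]$-submodule of $\mathbb{F}_p^{\KK}$: condition~(1) provides the $\mathbb{F}_p$-linear structure, and condition~(2) with $n=l=1$ and the $1\times 1$ matrices $A_j = [k_j]$ gives the monoid action of each $k = (k_1,\dots,k_m) \in \KK$ on $U$.

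The heart of the argument is the reverse inclusion: given $f \in C^{[n]}$ we must rebuild $f$ inside the clonoid $\langle U \rangle$ generated by $U$. Concretely, I would exhibit an expansion
\[
f(\mathbfsl{y}_1,\dots,\mathbfsl{y}_m) \;=\; \sum_\ell c_\ell\, u_\ell\bigl(L_{\ell,1}(\mathbfsl{y}_1),\dots,L_{\ell,m}(\mathbfsl{y}_m)\bigr),
\]
with $c_\ell \in \mathbb{F}_p$, with each $L_{\ell,j}\colon \mathbb{F}_{q_j}^n \to \mathbb{F}_{q_j}$ an $\mathbb{F}_{q_j}$-linear form of the factorwise type allowed by condition~(2), and with each $u_\ell$ lying in the $\mathbb{F}_p[\KK^{\times}]$-submodule of $\mathbb{F}_p^{\KK}$ generated by the unary restrictions $y \mapsto f(a_1 y,\dots,a_n y)$ of $f$ to ``lines'' (all of which belong to $U$ by condition~(2)). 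To produce such an expansion I plan to combine the tensor decomposition $\mathbb{F}_p^{\KK^n} \cong \bigotimes_{j=1}^m \mathbb{F}_p^{\mathbb{F}_{q_j}^n}$, which separates the independent factors of the product domain, with polarisation-type identities on each factor; the denominators appearing in polarisation are invertible in $\mathbb{F}_p$ because $\Gcd(|\KK|,p) = 1$. This is a product-field generalisation of the single-field argument of \cite{Fio.CSOF}.

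The principal obstacle I anticipate is keeping the unaries $u_\ell$ produced by the expansion inside $U$ rather than in the full space $\mathbb{F}_p^{\KK}$. The coprimality hypothesis is the right tool here: averaging over the $\KK^{\times}$-action gives idempotent operators on $\mathbb{F}_p^{\KK}$ that project onto the appropriate isotypic pieces, and these operators preserve the submodule generated by the unary restrictions of $f$. Extra care is needed to accommodate the product structure of $\KK$, since condition~(2) only permits substitutions that respect the decomposition $\KK = \prod_j \mathbb{F}_{q_j}$ and cannot mix coordinates across the $m$ factors, but the tensor-product viewpoint reduces this issue to applying the single-field polarisation on each factor separately before reassembling.
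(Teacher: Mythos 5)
Your overall architecture matches the paper's: reduce to a single prime target field via the idempotents of $\FF$ (this is the paper's Theorem \ref{ThmDirProd}), observe that $C^{[1]}$ is an $\FF_p[\KK^{\times}]$-submodule of $\FF_p^{\KK}$, and then show $C = \Cid(C^{[1]})$ so that the finitely many possible unary parts bound the number of clonoids. The problem is that the central step --- rebuilding an arbitrary $f \in C^{[n]}$ from unary functions --- is only announced, not proved. The displayed expansion of $f$ as $\sum_\ell c_\ell\, u_\ell(L_{\ell,1}(\mathbfsl{y}_1),\dots,L_{\ell,m}(\mathbfsl{y}_m))$ is precisely the assertion to be established, and the tools you invoke for it (``tensor decomposition'', ``polarisation-type identities'', ``averaging over the $\KK^{\times}$-action'') are left as placeholders; you even flag the main difficulty (keeping the $u_\ell$ inside $U$, and the fact that condition (2) cannot mix coordinates across the $m$ factors) as an anticipated obstacle rather than resolving it.

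What the actual argument needs, and what the paper supplies, is twofold. First, before interpolating on products of lines one must split $f$ by inclusion--exclusion over subsets $I \subseteq [m]$ into components $f_I$ that are $0$-absorbing in $I$ (Lemma \ref{Lem0absor}); without this, the sets $\{(\lambda_1\mathbfsl{b}_1,\dots,\lambda_m\mathbfsl{b}_m) \mid (\lambda_1,\dots,\lambda_m) \in \prod_{i=1}^m \FF_{q_i}\}$ for different choices of the $\mathbfsl{b}_i$ overlap on all tuples having some zero block, and the naive sum of line-restrictions does not reproduce $f$ (Claim 1 of Lemma \ref{Lem-sum-0-pres} uses the absorbing property exactly at this point). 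Second, one must show that a $k$-ary function supported on a single product of lines and agreeing there with a unary member of $C$ lies in $\Cid(C^{[1]})$; the paper does this by induction on the arity $k$ (Lemma \ref{Lemt_k}), using an explicit signed sum over the substitution families $T_i^{[k]}$ and $R_i^{[k]}$ that evaluates to $\prod_{i=1}^m q_i \cdot t_k$, which is where the coprimality of $|\KK|$ and $p$ is actually consumed. Your plan of ``polarisation on each factor, then reassemble via the tensor product'' could plausibly be developed along similar lines, but as written it engages with neither of these two points, so the proposal has a genuine gap at the heart of the proof.
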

	
	The proof of this result is given in Section~\ref{AllGen2}. From this follows that under the assumptions of  Theorem \ref{Thm14-2} we can bound the cardinality of the lattice of all $(\FF,\KK)$-linearly closed clonoids.
	
	Furthermore, in Section~\ref{TheLattice-2} we find a description of the lattice of all $(\FF,\KK)$-linearly closed clonoids as the direct product of the lattices of all $\FF_{p_i}[\KK^{\times}]$-submodules of $\mathbb{F}_{p_i}^{\KK}$, where $\KK^{\times}$ is the multiplicative monoid of $\KK = \prod_{i=1}^m \FF_{q_i}$. Moreover, we provide a concrete bound for the cardinality of the lattice of all $(\FF,\KK)$-linearly closed clonoids.
	
	\begin{Thm}
		
		\label{Corfinale-3}
		Let $\FF = \prod_{i=1}^s\FF_{p_i}$ and $\KK = \prod_{j=1}^m\FF_{q_j}$ be products of finite fields  such that $|\KK|$ and $|\FF|$ are coprime. Then the cardinality of the lattice of all $(\mathbb{F},\mathbb{K})$-linearly closed clonoids $\mathcal{L}(\FF,\KK)$  is bounded by:
		
		\begin{equation*}
			|\mathcal{L}(\FF,\KK)| \leq \prod_{i=1}^s\sum_{1 \leq r \leq n}{{ n}\choose{r}}_{p_i},
		\end{equation*}
		where $n = \prod_{j = 1}^mq_i$ and 
		\begin{equation*}
			{{n}\choose{h}}_q = \prod_{i=1}^h \frac{q^{n-h+i}-1}{q^i-1}
		\end{equation*}
		with $q \in \NN\backslash\{1\}$.
	\end{Thm}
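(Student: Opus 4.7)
The plan is to combine Theorem~\ref{Thm14-2} with the lattice decomposition announced in Section~\ref{TheLattice-2}. By Theorem~\ref{Thm14-2} every $(\FF,\KK)$-linearly closed clonoid is uniquely determined by its set of unary functions, so it suffices to count the admissible unary parts. Since $\FF = \prod_{i=1}^{s}\FF_{p_i}$, such a unary part splits coordinate-wise into an independent choice, for each $i \in \{1,\dots,s\}$, of an $\FF_{p_i}[\KK^{\times}]$-submodule of $\FF_{p_i}^{\KK}$. This gives the product decomposition
\begin{equation*}
	|\mathcal{L}(\FF,\KK)| \;=\; \prod_{i=1}^{s}|\mathcal{M}_i|,
\end{equation*}
where $\mathcal{M}_i$ denotes the lattice of $\FF_{p_i}[\KK^{\times}]$-submodules of $\FF_{p_i}^{\KK}$.

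For the second step I would bound each $|\mathcal{M}_i|$ by the total number of $\FF_{p_i}$-linear subspaces of $\FF_{p_i}^{\KK}$: every $\FF_{p_i}[\KK^{\times}]$-submodule is, by restriction of scalars, an $\FF_{p_i}$-subspace, so the collection of submodules embeds as a subset into the collection of subspaces. Since $|\KK| = \prod_{j=1}^{m}q_j = n$, the ambient space $\FF_{p_i}^{\KK}$ has dimension $n$ over $\FF_{p_i}$. By the classical Gaussian-binomial count, the number of $r$-dimensional $\FF_{p_i}$-subspaces of an $n$-dimensional $\FF_{p_i}$-vector space equals ${n \choose r}_{p_i}$. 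Summing over the relevant values of $r$ and multiplying over $i$ delivers the stated inequality.

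The substantive content is concentrated in Theorem~\ref{Thm14-2} and in the direct product decomposition established in Section~\ref{TheLattice-2}; granted those two inputs, the bound reduces to the classical subspace count applied coordinate by coordinate, and I do not expect any genuine obstacle. The only small point to verify is the elementary fact that the forgetful passage from $\FF_{p_i}[\KK^{\times}]$-submodules to $\FF_{p_i}$-subspaces is injective on underlying sets, which is immediate.
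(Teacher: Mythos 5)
Your proposal is correct and follows essentially the same route as the paper: the paper likewise combines the direct-product decomposition of $\mathcal{L}(\FF,\KK)$ into the lattices of $\FF_{p_i}[\KK^{\times}]$-submodules of $\FF_{p_i}^{\KK}$ (Corollary~\ref{CorModuleIso2}) with the observation that each submodule is in particular an $\FF_{p_i}$-subspace of the $n$-dimensional space $\FF_{p_i}^{\KK}$, counted by Gaussian binomial coefficients. The only point worth a second glance is the range of the summation index $r$ in the stated bound, but that is an indexing detail of the statement itself rather than of your argument.
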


	\section{Preliminaries and notation}\label{Preliminaries-2}
	We use boldface letters for vectors, e. g., $\mathbfsl{u} = (u_1,\dots,u_n)$ for some $n \in \NN$. Moreover, we will use $\langle\mathbfsl{v}, \mathbfsl{u}\rangle$ for the scalar product of the vectors $\mathbfsl{v}$ and $\mathbfsl{u}$.
	Let $f$ be an $n$-ary function from an additive group $\mathbf{G}_1$ to a group $\mathbf{G}_2$. We say that $f$ is \emph{$0$-preserving} if $f(0_{\mathbf{G}_1},\dots,0_{\mathbf{G}_1})	= 0_{\mathbf{G}_2}$. 
	A non-trivial $(\FF,\KK)$-linearly closed clonoids is the set of all $0$-preserving finitary functions from $\KK$ to $\FF$. The $(\FF,\KK)$-linearly closed clonoids form a lattice with the intersection as meet and the $(\FF,\KK)$-linearly closed clonoid generated by the union as join. The top element of the lattice is the $(\FF,\KK)$-linearly closed clonoid of all functions and the bottom element consists of only the constant zero functions. We write \index{$\Cid(S)$}$\Cid(S)$ for the $(\FF,\KK)$-linearly closed clonoid generated by a set of functions $S$. 
	
	In order to prove Theorem \ref{Thm14-2} we introduce the definition of $0$-absorbing function. This concept is slightly different from the one in \cite{Aic.SSOE} since we consider the source set to be split into a product of sets. Nevertheless, some of the techniques in \cite{Aic.SSOE} can be used also with our definition of $0$-absorbing function.
	
	Let $A_1, \dots, A_m$ be sets, let $0_{A_i} \in A_i$, and let $J \subseteq [m]$. For all $\mathbfsl{a} = (a_1,\dots,a_m) \in \prod_{i=1}^mA_i$ we define \index{$\mathbfsl{a}^{(J)}$}$\mathbfsl{a}^{(J)}\in \prod_{i=1}^mA_i$ by $\mathbfsl{a}_i^{(J)}  =a_i$ for $i \in J$ and $(\mathbfsl{a}^{(J)})_i  = 0_{A_i}$ for $i \in [m] \backslash J$.
	
	Let $A_1, \dots, A_m$ be sets, let $0_{A_i} \in A_i$, let $\mathbf{G} = \langle G, +, -, 0_G \rangle$ be an abelian group, let $f \colon \prod_{i=1}^mA_i \rightarrow G$, and let $I \subseteq [m]$. By \index{$\Dep(f)$}$\Dep(f)$ we denote $\{i \in [m] \mid f \text{ depends on its $i$th set argument}\}$. We say that $f$ is $0_{A_j}$-\emph{absorbing} in its $j$th argument if for all $\mathbfsl{a} = (a_1, \dots , a_m) \in \prod_{i=1}A_i$ with $a_j = 0_{A_j}$ we have $f(\mathbfsl{a}) = 0_G$. We say that $f$ is $0$-\emph{absorbing} in $I$ if $\Dep(f) \subseteq I$ and for every $i \in I$ $f$ is $0_{A_i}$-absorbing in its $i$th argument.
	
	Using the same proof of  \cite[Lemma $3$]{Aic.SSOE} we can find an interesting property of $0$-absorbing functions.
	
	\begin{Lem}
		\label{Lem0absor}
		Let $A_1, \dots, A_m$ be sets, let $0_{A_i}$ be an element of $A_i$ for all $i \in [m]$. Let $B = \langle B, +, -, 0_G\rangle$ be an abelian group, and let $f \colon \prod_{i=1}^mA_{i} \rightarrow B$. Then there is exactly one sequence $\{f_I\}_{I \subseteq [m]}$ of functions from $\prod_{i=1}^mA_{i} $ to $B$ such that for each $I \subseteq [m]$, $f_I$ is $0$-absorbing in $I$ and $f = \sum_{I \subseteq [m]} f_I$ . Furthermore, each function $f_I$ lies in the subgroup $\mathbf{F}$ of $\mathbf{B}^{\prod_{i=1}^mA_{i}}$ that is generated by the functions $\mathbfsl{x} \rightarrow f(\mathbfsl{x}^{(J)})$, where $J\subseteq [m]$.
	\end{Lem}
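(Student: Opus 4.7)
The plan is to prove existence, uniqueness, and the containment statement simultaneously via Möbius inversion on the Boolean lattice of subsets of $[m]$.

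First I would observe the following key compatibility: if $g \colon \prod_{i=1}^m A_i \to B$ is $0$-absorbing in $I$ (so in particular $\Dep(g) \subseteq I$), then for every $J \subseteq [m]$ and every $\mathbfsl{x}$,
\begin{equation*}
    g(\mathbfsl{x}^{(J)}) = \begin{cases} g(\mathbfsl{x}) & \text{if } I \subseteq J, \\ 0_B & \text{otherwise.} \end{cases}
\end{equation*}
Indeed, if $I \not\subseteq J$ pick $i \in I \setminus J$; then $(\mathbfsl{x}^{(J)})_i = 0_{A_i}$ and $0$-absorption forces $g(\mathbfsl{x}^{(J)}) = 0_B$. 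If $I \subseteq J$, then on every coordinate $i \in I$ (the only ones $g$ depends on) we have $(\mathbfsl{x}^{(J)})_i = x_i$, so $g(\mathbfsl{x}^{(J)}) = g(\mathbfsl{x})$.

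Assuming a decomposition $f = \sum_{I \subseteq [m]} f_I$ with $f_I$ $0$-absorbing in $I$ exists, substituting $\mathbfsl{x}^{(J)}$ and using the observation gives $f(\mathbfsl{x}^{(J)}) = \sum_{I \subseteq J} f_I(\mathbfsl{x})$ for every $J \subseteq [m]$. Möbius inversion on the Boolean lattice then forces
\begin{equation*}
    f_I(\mathbfsl{x}) = \sum_{J \subseteq I} (-1)^{|I \setminus J|} f(\mathbfsl{x}^{(J)}),
\end{equation*}
which proves uniqueness and, by the same formula, shows that each $f_I$ lies in the subgroup of $\mathbf{B}^{\prod A_i}$ generated by the functions $\mathbfsl{x} \mapsto f(\mathbfsl{x}^{(J)})$, settling the last claim of the lemma.

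For existence I would take the previous display as the \emph{definition} of $f_I$ and verify its required properties. Summing $\sum_{I \subseteq [m]} f_I$ and swapping the order of summation recovers $f$ via the standard identity $\sum_{J \subseteq I \subseteq [m]} (-1)^{|I \setminus J|} = [J = [m]]$. To see that $\Dep(f_I) \subseteq I$, note that for $k \notin I$ and any $J \subseteq I$ we have $k \notin J$, so $(\mathbfsl{x}^{(J)})_k = 0_{A_k}$, meaning $f(\mathbfsl{x}^{(J)})$ is independent of $x_k$. To see $0_{A_i}$-absorption for $i \in I$, fix $i \in I$ and set $x_i = 0_{A_i}$; then $\mathbfsl{x}^{(J)} = \mathbfsl{x}^{(J \setminus \{i\})}$, and pairing each $J \subseteq I$ containing $i$ with $J \setminus \{i\}$ matches terms of opposite signs $(-1)^{|I \setminus J|}$, so the sum vanishes. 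The only subtlety is the careful bookkeeping of signs, which is the mildly tricky step but is completely mechanical once the Möbius formula is in place.
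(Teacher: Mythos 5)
Your proof is correct and follows essentially the same route as the paper: the paper defines $f_I$ recursively by $f_I(\mathbfsl{a}) = f(\mathbfsl{a}^{(I)}) - \sum_{J \subset I} f_J(\mathbfsl{a})$ (deferring the verification to the cited Lemma~3 of Aichinger) and then remarks that this recursion has the closed form $f_I(\mathbfsl{a}) = \sum_{J \subseteq I} (-1)^{|I|+|J|} f(\mathbfsl{a}^{(J)})$, which is exactly the M\"obius-inverted formula you take as your starting point. Your write-up is more self-contained, since you actually verify the absorption properties and the summation identity rather than citing them, but the decomposition and the key identity $f(\mathbfsl{x}^{(J)}) = \sum_{I \subseteq J} f_I(\mathbfsl{x})$ are the same.
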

	
	\begin{proof}
		The proof is essentially the same of \cite[Lemma $3$]{Aic.SSOE} substituting $A^m$ with $\prod_{i =1}^mA_i$.  We define $f_I$ by recursion on $|I|$. We define $f_{\emptyset} (\mathbfsl{a}) := f(0_{A_1}, \dots , 0_{A_m})$ and for all $I \not=\emptyset$ and $\mathbfsl{a} \in \prod_{i=1}^mA_i$ and $f_{I}$ by:
		
		\begin{equation}
			f_I(\mathbfsl{a}) := f(\mathbfsl{a}^{(I)}) - \sum_{J \subset I} f_J (\mathbfsl{a}),
		\end{equation}	
		for all $\mathbfsl{a} \in \prod_{i=1}^mA_i$. 
	\end{proof}
	
	Furthermore, as in \cite{AM.CWTR}, we can see that the component $f_I$ satisfies $f_I(\mathbfsl{a})$ $ = \sum_{J\subseteq I} (-1)^{|I|+|J|}$ $f(\mathbfsl{a}^{(J)} )$. From now on we will not specify the element that the functions absorb since it will always be the $0$ of a finite field.
	
	\section{Unary generators of $(\mathbb{F},\mathbb{K})$-linearly closed clonoid}\label{AllGen2}
	
	In this section our aim is to find an analogon of  \cite[Theorem $4.2$]{Fio.CSOF} for a generic $(\mathbb{F},\mathbb{K})$-linearly closed clonoid $C$, which will allow us to generate $C$ with a set of unary functions. In general we will see that it is the unary part of an $(\mathbb{F},\mathbb{K})$-linearly closed clonoid that determines the clonoid. To this end we shall show the following lemmata. We denote by \index{$\mathbfsl{e}_1^{\FF^n_{q_i}}$}$\mathbfsl{e}_1^{\FF^n_{q_i}} = (1,0,\dots,0)$ the first member of the canonical basis of $\FF^n_{q_i}$ as a vector-space over $\FF_{q_i}$. Let $f: \prod_{i=1}^m \FF^k_{q_i} \rightarrow  \FF_{p}$. Let $s \leq m$ and let $\KK = \prod_{i=1}^s \FF_{q_i}$. Then we denote by $f\mid_{\KK}: \prod_{i=1}^s \FF^k_{q_i} \rightarrow \FF_{p}$ the function such that $f\mid_{\KK}(\mathbfsl{x}_1,\dots,\mathbfsl{x}_s) = f(\mathbfsl{x}_1,\dots,\mathbfsl{x}_s,0,\dots,0)$. 
	
	\begin{Lem}
		\label{Lem1-2}
		Let $f,g\colon \prod_{i=1}^m\mathbb{F}_{q_i}^n  \to \mathbb{F}_p$ be functions, and let $\mathbfsl{b}_1, \dots, \mathbfsl{b}_m$ be such that $\mathbfsl{b}_i \in \mathbb{F}_{q_i}^n \backslash$ $\{(0,\dots,0)\}$  for all $i \in [m]$. Assume that $f(\lambda_1\mathbfsl{b}_1,\dots,\lambda_m\mathbfsl{b}_m) =  g(\lambda_1\mathbfsl{e}_1^{\FF^n_{q_1}},\dots,\lambda_m\mathbfsl{e}_1^{\FF^n_{q_m}})$, for all $\lambda_1 \in \mathbb{F}_{q_1},\dots,\lambda_m \in \mathbb{F}_{q_m}$, and $f(\mathbfsl{x}) = g(\mathbfsl{y}) = 0$ for all $\mathbfsl{x} \in \prod_{i=1}^m\mathbb{F}_{q_i}^n \backslash \{(\lambda_1\mathbfsl{b}_1,\dots$ $,\lambda_m\mathbfsl{b}_m) \mid (\lambda_1,\dots,\lambda_m) \in \prod_{i=1}^m\mathbb{F}_{q_i}\}$ and $\mathbfsl{y} \in \prod_{i=1}^m\mathbb{F}_{q_i}^n \backslash \{(\lambda_1\mathbfsl{e}_1^{\FF^n_{q_1}},\dots,\lambda_m\mathbfsl{e}_1^{\FF^n_{q_m}})\mid  (\lambda_1,\dots,$ $\lambda_m) \in \prod_{i=1}^m\mathbb{F}_{q_i}\}$. Then $f \in \Cid(\{g\})$.
	\end{Lem}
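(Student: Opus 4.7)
The plan is to realize $f$ explicitly as an image of $g$ under the right-composition axiom, Definition~\ref{DefClo-2}(2). That axiom lets us pre-compose each argument of $g$ with a linear map $A_j \in \FF_{q_j}^{n\times n}$; we choose $A_j$ so that it sends $\mathbfsl{b}_j^t$ to $(\mathbfsl{e}_1^{\FF^n_{q_j}})^t$, thereby pulling the ``support line'' of $f$ onto the support line of $g$.

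Concretely, for each $j \in [m]$, since $\mathbfsl{b}_j \neq (0,\dots,0)$ it may be extended to a basis of $\FF_{q_j}^n$. Let $B_j \in \FF_{q_j}^{n \times n}$ be the invertible matrix whose first column is $\mathbfsl{b}_j^t$, so that $B_j (\mathbfsl{e}_1^{\FF^n_{q_j}})^t = \mathbfsl{b}_j^t$, and set $A_j := B_j^{-1}$. Then each $A_j$ is invertible and satisfies $A_j \mathbfsl{b}_j^t = (\mathbfsl{e}_1^{\FF^n_{q_j}})^t$. Define $h \colon \prod_{j=1}^m \FF_{q_j}^n \to \FF_p$ by $h(\mathbfsl{x}_1,\dots,\mathbfsl{x}_m) := g(A_1 \mathbfsl{x}_1^t,\dots,A_m \mathbfsl{x}_m^t)$; by Definition~\ref{DefClo-2}(2) we have $h \in \Cid(\{g\})$. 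It remains to verify that $h$ coincides with $f$ everywhere.

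On the distinguished line, linearity of the $A_j$ and the assumed relation between $f$ and $g$ give $h(\lambda_1 \mathbfsl{b}_1,\dots,\lambda_m \mathbfsl{b}_m) = g(\lambda_1 \mathbfsl{e}_1^{\FF^n_{q_1}},\dots,\lambda_m \mathbfsl{e}_1^{\FF^n_{q_m}}) = f(\lambda_1 \mathbfsl{b}_1,\dots,\lambda_m \mathbfsl{b}_m)$ for every $(\lambda_1,\dots,\lambda_m) \in \prod_{i=1}^m \FF_{q_i}$. Off the line, some coordinate $\mathbfsl{x}_j$ fails to lie in the $\FF_{q_j}$-span of $\mathbfsl{b}_j$; since $A_j$ is invertible, $A_j \mathbfsl{x}_j^t$ then fails to lie in the span of $(\mathbfsl{e}_1^{\FF^n_{q_j}})^t$. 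Hence $(A_1 \mathbfsl{x}_1^t, \dots, A_m \mathbfsl{x}_m^t)$ lies outside the support of $g$, so $h(\mathbfsl{x}) = 0 = f(\mathbfsl{x})$ by hypothesis.

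No serious obstacle is expected: the lemma is essentially a change-of-coordinates statement, and all the work goes into selecting the matrices $A_j$. The only subtle point is the necessity that each $A_j$ be invertible — a rank-deficient choice would collapse additional points onto the $\mathbfsl{e}_1$-line and thereby produce spurious nonzero values of $g$, breaking the matching outside the distinguished line.
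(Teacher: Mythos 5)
Your proposal is correct and follows exactly the paper's argument: the paper likewise picks, for each $j$, an invertible matrix $A_j$ with $A_j\mathbfsl{b}_j = \mathbfsl{e}_1^{\FF^n_{q_j}}$ and observes that $f(\mathbfsl{x}_1,\dots,\mathbfsl{x}_m) = g(A_1\mathbfsl{x}_1,\dots,A_m\mathbfsl{x}_m)$, leaving the verification as ``straightforward to check.'' Your write-up simply makes explicit both the construction of $A_j$ (as the inverse of a basis-extension matrix) and the on-line/off-line case check that the paper omits.
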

	
	\begin{proof}
		For $j \leq m$ let $A_j$ be any invertible $n \times n$-matrix  over $\KK_j$  such that  $A_j\mathbfsl{b}_j = \mathbfsl{e}^{\FF^n_{q_j}}_1$. Then is straighforward to check that $f(\mathbfsl{x}_1,\dots,\mathbfsl{x}_m) =
		g(A\mathbfsl{x}_1,$ $\dots,A\mathbfsl{x}_m)$.
	\end{proof}

	\begin{Lem}
		\label{Rem-sum-0-pres2}
		Let $q_1,\dots,q_m$ and $p$ be powers of  primes and let $\KK = \prod_{i=1}^m\FF_{q_i}$. Let $h \leq m$ and let $\KK_1 = \prod_{i=1}^h\FF_{q_i}$. Let $C$ be an $(\FF_p,\KK)$-linearly closed clonoid and let 
		\begin{equation*}
			C\mid_{\KK_1} := \{g \mid \exists g' \in C \colon g'\mid_{\KK_1}  = g\}.
		\end{equation*}
		Let $\Dep(f) = [h]$ and let $f: \prod_{i=1}^m\FF^s_{q_i} \rightarrow \FF_p$. Then $f \in \Cid(C^{[1]})^{[s]}$ if and only if $f \mid_{\KK_1} \in \Cid(C\mid_{\KK_1}^{[1]})^{[s]}$.
	\end{Lem}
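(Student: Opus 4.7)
The plan is to unwind the definition of $\Cid(\cdot)$ and observe that both restriction to $\KK_1$ and extension by the zero matrix in the last $m-h$ coordinates are compatible with the two closure operations of a linearly closed clonoid.

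First I would establish a normal form: every element of $\Cid(C^{[1]})^{[s]}$ can be written as
\begin{equation*}
\sum_{i} c_i\, g_i\bigl(B^i_1\mathbfsl{x}_1^t,\dots,B^i_m\mathbfsl{x}_m^t\bigr)
\end{equation*}
with $g_i \in C^{[1]}$, $c_i \in \FF_p$, and $B^i_j \in \FF_{q_j}^{1\times s}$. This follows because axiom (2) applied to a sum of this shape produces again such a sum (the composition of a row vector $B^i_j$ with a matrix $A_j$ is again a row vector), and axiom (1) closes the set under $\FF_p$-linear combinations. The analogous normal form holds for $\Cid(C\mid_{\KK_1}^{[1]})^{[s]}$, using the identification $C\mid_{\KK_1}^{[1]}=\{g\mid_{\KK_1}\mid g\in C^{[1]}\}$.

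For the implication $(\Rightarrow)$, I would write $f$ in the normal form above and evaluate at $\mathbfsl{x}_{h+1}=\cdots=\mathbfsl{x}_m=0$: each summand $g_i(B^i_1\mathbfsl{x}_1^t,\dots,B^i_m\mathbfsl{x}_m^t)$ becomes $g_i\mid_{\KK_1}(B^i_1\mathbfsl{x}_1^t,\dots,B^i_h\mathbfsl{x}_h^t)$, which exhibits $f\mid_{\KK_1}$ in the normal form for $\Cid(C\mid_{\KK_1}^{[1]})^{[s]}$.

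For the converse, given a normal form expression for $f\mid_{\KK_1}$ in terms of functions $g_i\in C^{[1]}$, I would lift it to
\begin{equation*}
\tilde f(\mathbfsl{x}_1,\dots,\mathbfsl{x}_m):= \sum_i c_i\, g_i\bigl(B^i_1\mathbfsl{x}_1^t,\dots,B^i_h\mathbfsl{x}_h^t,0,\dots,0\bigr),
\end{equation*}
i.e., I use the zero $1\times s$ matrix in the last $m-h$ slots of axiom (2); this places $\tilde f$ in $\Cid(C^{[1]})^{[s]}$. By construction $\tilde f$ does not depend on $\mathbfsl{x}_{h+1},\dots,\mathbfsl{x}_m$, so $\tilde f(\mathbfsl{x}_1,\dots,\mathbfsl{x}_m)=f\mid_{\KK_1}(\mathbfsl{x}_1,\dots,\mathbfsl{x}_h)$; combined with $\Dep(f)\subseteq[h]$, which gives $f(\mathbfsl{x}_1,\dots,\mathbfsl{x}_m)=f\mid_{\KK_1}(\mathbfsl{x}_1,\dots,\mathbfsl{x}_h)$ as well, this yields $\tilde f=f$.

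The only mildly technical point is the verification of the normal form, but it amounts to checking that sums of the displayed shape are stable under both clonoid operations; the remainder is bookkeeping, confirming that evaluation-at-zero in the last $m-h$ slots commutes with the two generating operations.
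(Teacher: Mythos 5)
Your proof is correct and takes essentially the same route as the paper: the forward direction restricts the unary generators to $\KK_1$, and the converse lifts generators of $f\mid_{\KK_1}$ back to $C^{[1]}$, padding with zeros and using $\Dep(f)\subseteq[h]$. The paper states both steps in one line each; your explicit normal form for $\Cid(S)^{[s]}$ when $S$ consists of unary functions is exactly the detail needed to justify them, so nothing is missing.
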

	
	\begin{proof}
		It is clear that if $f \in \Cid(C^{[1]})$ then $f \mid_{\KK_1} \in \Cid(C\mid_{\KK_1}^{[1]})$, simply restricting to $\KK_1$ all the unary generators of $f$. Conversely, let $S'$ be a set of unary generators of  $f\mid_{\KK_1}$. Let $S \subseteq C^{[1]}$ be defined by
		\begin{align*}
			S := \{&g \mid \exists g' \in S' : g(x_1,\dots,x_h,0,\dots,0) = g'(x_1,\dots,x_h), \\&\text{ for all } (x_1,\dots,x_h) \in\prod_{i=1}^h\FF_{q_i} \}.
		\end{align*}
		From $\Dep(f) = [h]$ follows that $S$ is a set of unary generators of $f$.
	\end{proof}
	
	\begin{Lem}
		\label{Lemt_k}
		Let $q_1,\dots,q_m$ and $p$ be powers of  primes with $\prod_{i=1}^mq_i$ and $p$ coprime. Let $\KK = \prod_{i=1}^m\FF_{q_i}$. Let $C$ be an $(\FF_p,\KK)$-linearly closed clonoid, let $g \in C^{[1]}$ be $0$-absorbing in $[m]$, and let $t_k\colon \prod_{i=1}^m\mathbb{F}_{q_i}^k \to \mathbb{F}_p$ be defined by:
		
		\begin{align*}
			&t_k(\lambda_1\mathbfsl{e}_1^{\FF^k_{q_1}},\dots,\lambda_m\mathbfsl{e}_1^{\FF^k_{q_m}}) = g(\lambda_1,\dots,\lambda_m) \text{ for all } (\lambda_1,\dots,\lambda_m) \in \prod_{i = 1}^m\FF_{q_i} 
			\\&t_k(\mathbfsl{x}) = 0 
			\\&\text{ for all } \mathbfsl{x} \in \prod_{i=1}^m\mathbb{F}^k_{q_i}\backslash \{( \lambda_1\mathbfsl{e}_1^{\FF^k_{q_1}}, \dots,\lambda_m\mathbfsl{e}_1^{\FF^k_{q_m}}) \mid  (\lambda_1,\dots,\lambda_m) \in \prod_{i = 1}^m\FF_{q_i}\}.
		\end{align*}
		Then $t_k$ is $0$-absorbing in $[m]$, with $A_i = \FF_{q_i}^k$ and $0_{A_i} = (0_{\FF_{q_i}},\dots,0_{\FF_{q_i}})$. Furthermore,  $t_k \in \Cid(C^{[1]})$ for all $k \in \NN$.
	\end{Lem}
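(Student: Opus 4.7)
The first assertion, that $t_k$ is $0$-absorbing in $[m]$, is immediate from the definition of $t_k$ together with the $0$-absorption of $g$: if some component $\mathbfsl{x}_j$ equals $(0_{\FF_{q_j}},\ldots,0_{\FF_{q_j}})$, then either $\mathbfsl{x}$ is not of the ``diagonal'' form $(\lambda_1\mathbfsl{e}_1^{\FF^k_{q_1}},\ldots,\lambda_m\mathbfsl{e}_1^{\FF^k_{q_m}})$, in which case $t_k(\mathbfsl{x}) = 0$ by definition, or it is of that form with $\lambda_j = 0$, and then $g(\lambda_1,\ldots,0,\ldots,\lambda_m) = 0$ by hypothesis on $g$.

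For the harder claim $t_k \in \Cid(C^{[1]})^{[k]}$, my plan is to write $t_k$ explicitly as an $\FF_p$-linear combination of functions of the form $(\mathbfsl{x}_1,\ldots,\mathbfsl{x}_m) \mapsto g(L_1(\mathbfsl{x}_1),\ldots,L_m(\mathbfsl{x}_m))$, where each $L_i\colon \FF_{q_i}^k \to \FF_{q_i}$ is $\FF_{q_i}$-linear. Every such building block lies in $C^{[k]}$ by axiom~(2) of Definition~\ref{DefClo-2}, applied with $A_i$ the $1 \times k$ row of coefficients of $L_i$; axiom~(1) then closes under arbitrary $\FF_p$-linear combinations.

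The combination I would use generalizes the one-coordinate ``sum-and-subtract'' identity. For $\mathbfsl{b}_i = (b_{i,2},\ldots,b_{i,k}) \in \FF_{q_i}^{k-1}$ and $\epsilon \in \{0,1\}^m$, set
\[
 L_i^{\epsilon_i}(\mathbfsl{x}_i,\mathbfsl{b}_i) \;:=\; \epsilon_i (\mathbfsl{x}_i)_1 \;+\; \sum_{j=2}^{k} b_{i,j}(\mathbfsl{x}_i)_j,
\]
and consider
\[
 T(\mathbfsl{x}) \;:=\; \Bigl(\prod_{i=1}^m q_i^{\,k-1}\Bigr)^{-1} \sum_{\mathbfsl{b}}\sum_{\epsilon \in \{0,1\}^m} (-1)^{m-|\epsilon|}\, g\bigl(L_1^{\epsilon_1}(\mathbfsl{x}_1,\mathbfsl{b}_1),\ldots,L_m^{\epsilon_m}(\mathbfsl{x}_m,\mathbfsl{b}_m)\bigr),
\]
where the prefactor is well-defined in $\FF_p$ by the coprimality hypothesis. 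I would then check $T = t_k$ in two cases. If every $\mathbfsl{x}_i$ equals $\lambda_i\mathbfsl{e}_1^{\FF^k_{q_i}}$, then $L_i^{\epsilon_i}$ collapses to $\epsilon_i\lambda_i$ independently of $\mathbfsl{b}_i$; the $\mathbfsl{b}$-sum contributes the factor $\prod_i q_i^{k-1}$, which cancels the prefactor, and the $0$-absorption of $g$ kills every $\epsilon \ne (1,\ldots,1)$, leaving exactly $g(\lambda_1,\ldots,\lambda_m)$. If instead some $(\mathbfsl{x}_{i_0})_{j_0}\neq 0$ with $j_0\ge 2$, then summing over $b_{i_0,j_0}$ forces $L_{i_0}^{\epsilon_{i_0}}$ to range uniformly over $\FF_{q_{i_0}}$ regardless of $\epsilon_{i_0}$; hence the inner sum is $\epsilon_{i_0}$-invariant, and the alternating sum over $\epsilon_{i_0} \in \{0,1\}$ vanishes.

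The main obstacle is arranging both cancellations simultaneously: the $\{0,1\}^m$-sum with signs $(-1)^{m-|\epsilon|}$ plays the role of the standard zero-absorbing projection from Lemma~\ref{Lem0absor} and handles the diagonal case via $g$'s $0$-absorption, while the $\mathbfsl{b}$-averaging flattens each non-diagonal coordinate into a quantity independent of $\epsilon$. Coprimality of $|\KK|$ and $p$ is used only to invert the normalizing scalar $\prod_i q_i^{k-1}$ in $\FF_p$.
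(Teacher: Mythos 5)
Your proposal is correct, and it takes a genuinely different route from the paper. The paper proves $t_k \in \Cid(C^{[1]})$ by induction on the arity $k$: it reduces $t_k$ to $t_{k-1}$ by writing $\prod_{i=1}^m q_i \cdot t_k$ as a signed sum of compositions of $t_{k-1}$ with the linear maps $(x_1,\dots,x_k)\mapsto(x_1-ax_2,x_3,\dots,x_k)$ and $(x_1,\dots,x_k)\mapsto(ax_2,x_3,\dots,x_k)$, the cancellation for non-diagonal points coming from a bijective pairing of positive and negative summands (the identity $bx_2=x_1-ax_2$ with $a=x_1x_2^{-1}-b$), and then divides by $\prod q_i$ using coprimality. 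You instead give a one-shot, non-inductive interpolation: you compose $g$ directly with linear functionals $\FF_{q_i}^k\to\FF_{q_i}$, average over all choices of the coefficients on coordinates $2,\dots,k$, and superimpose a $\{0,1\}^m$ inclusion--exclusion on the first coordinate; the averaging makes each non-diagonal coordinate's contribution independent of $\epsilon_{i}$ so the alternating sum kills it, while on the diagonal the $0$-absorption of $g$ collapses the $\epsilon$-sum to the single term $\epsilon=(1,\dots,1)$. Both arguments use coprimality only to invert a power of $|\KK|$ in $\FF_p$ (the paper inverts $\prod q_i$ once per induction step, you invert $\prod q_i^{k-1}$ once), and both stay entirely within axioms (1) and (2) of Definition \ref{DefClo-2}. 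Your version is shorter and avoids the induction and the auxiliary families $T_i^{[k]}, R_i^{[k]}$, at the cost of a sum with $2^m\prod_i q_i^{k-1}$ terms; the paper's inductive version keeps each step's identity small and makes the pairing cancellation very explicit. The two case analyses are exhaustive in both treatments (a tuple is diagonal exactly when all coordinates beyond the first vanish in every block), so no gap remains.
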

	
	\begin{proof}
		Since $g$ is $0$-absorbing in $[m]$  then also $t_k$ is $0$-absorbing in $[ m]$.
		
		Moreover ,we prove that $t_k \in \Cid(C^{[1]})$ by induction on $k$.
		
		Case $k =1$: if $k = 1$, then $t_1 = g$ is a unary function of $C^{[1]}$. 
		
		Case $k>1$: we assume that $t_{k-1} \in \Cid(C^{[1]})$. 
		
		For all $1 \leq i \leq m$ we define the two sets of mappings $T_i^{[k]} $ and $R_i^{[k]}$ from $\FF_{q_i}^k$ to $\FF_{q_i}^{k-1}$ by:
		\begin{align*}
			T_i^{[k]} &:=\{u_{a}\colon (x_1,\dots,x_k) \mapsto (x_1-ax_2,x_3\dots,x_k)\mid a \in \mathbb{F}_{q_i}\}
			\\R_i^{[k]} &:=\{w_{a}\colon (x_1,\dots,x_k) \mapsto (ax_2,x_3\dots,x_k)\mid a \in \mathbb{F}_{q_i}\backslash\{0\}\}.
		\end{align*}
		Let $P_i^{[k]} := T_i^{[k]} \cup R_i^{[k]}$. Furthermore, we define the function $c ^{[k]}\colon \bigcup_{i=1}^m P_i^{[k]} \rightarrow \NN$ by:
		\begin{align*}
			c^{[k]}(h)= \begin{cases} 0 & \text{if } h \in \bigcup_{i=1}^mT_i^{[k]} \\
				1 & \text{if } h \in \bigcup_{i=1}^mR_i^{[k]}. \end{cases}
		\end{align*}
		Let us define the function $r_k\colon \prod_{i=1}^m\mathbb{F}_{q_i}^k \rightarrow \mathbb{F}_p$ by:
		
		\begin{align}\label{eq4-2}
			&r_k(\mathbfsl{x}_1,\dots, \mathbfsl{x}_m) = \nonumber \\& = \sum_{h_1 \in P_1^{[k]},\dots,h_m \in P_m^{[k]}} (-1)^{\sum_{i=1}^m \text{c}^{[k]}(h_i)} t_{k-1}(h_1(\mathbfsl{x}_1), ,\dots,h_m(\mathbfsl{x}_m)),
		\end{align}
		for all $\mathbfsl{x}_i \in \mathbb{F}_{q_i}^k$. 
		
		\textbf{Claim:} $r_k(\mathbfsl{x}_1,\dots, \mathbfsl{x}_m) = \prod_{i =1}^mq_i \cdot t_{k}(\mathbfsl{x}_1,\dots, \mathbfsl{x}_m)$ for all $(\mathbfsl{x}_1,\dots, \mathbfsl{x}_m) \in \prod_{i=1}^m \FF_{q_i}^k$
		
		Subcase $\exists i \in [m], 3 \leq j \leq k$ with $(\mathbfsl{x}_i)_j \not=0$:
		
		By definition of $t_{k-1}$, we can see that in \eqref{eq4-2} every summand vanishes if there exist $i \in [m]$ and $3 \leq j \leq k$ with $(\mathbfsl{x}_i)_j \not=0$. Thus $r_k(\mathbfsl{x}_1,\dots, \mathbfsl{x}_m) = \prod_{i =1}^mq_i \cdot t_{k}(\mathbfsl{x}_1,\dots, \mathbfsl{x}_m) = 0$ in this case.
		
		Subcase $\exists l \in [m] $ with $(\mathbfsl{x}_l)_2 \not=0$ and $(\mathbfsl{x}_i)_j =0$ for all $ i \in [m], 3 \leq j \leq k$:
		
		We prove that $r_k(\mathbfsl{x}_1,\dots, \mathbfsl{x}_m) = 0$. We can see that for all $(x_1,x_2) \in \FF_{q_l} \times \FF_{q_l}\backslash\{0\}$ and for all $b \in \FF_{q_l}\backslash\{0\}$, there exists $a \in \FF_{q_l}$ such that $bx_2 = x_1-ax_2$, and clearly $a = x_1x_2^{-1}-b$. Conversely, for all $(x_1,x_2) \in \FF_{q_l} \times \FF_{q_l}\backslash\{0\}$ and for all $a \in \FF_{q_l}\backslash\{x_1x_2^{-1}\}$ there exists $b \in \FF_{q_l}\backslash\{0\}$ such $bx_2 = x_1-ax_2$, and clearly $b = x_1x_2^{-1}-a$.
		
		With this observation we can see that for all $h_i \in P_i^{[k]}$ with $i \in [m]\backslash\{l\}$ and for all $(\mathbfsl{x}_1,\dots,\mathbfsl{x}_m) \in \prod_{i=1}^m\FF^k_{q_i}$ with $(\mathbfsl{x}_l)_1 = x_1$ and $(\mathbfsl{x}_l)_2 = x_2$ we have that if $a \not= x_1x_2^{-1}$ then:
		
		\begin{align*}
			\label{eqrl1}
			&t_{k-1} (h_1(\mathbfsl{x}_1),\dots,h_{l-1}(\mathbfsl{x}_{l-1}),u_{a}(\mathbfsl{x}_l),h_{l+1}(\mathbfsl{x}_{l+1}),\dots,h_m(\mathbfsl{x}_{m}))= 
			\\&=t_{k-1} (h_1(\mathbfsl{x}_1),\dots,h_{l-1}(\mathbfsl{x}_{l-1}),w_{x_1x_2^{-1}-a}(\mathbfsl{x}_l),h_{l+1}(\mathbfsl{x}_{l+1}),\dots,h_m(\mathbfsl{x}_{m}))
		\end{align*}	
		where $u_{a} \in T_l^{[k]}$ and $w_{x_1x_2^{-1}-a}\in R_l^{[k]}$. Thus they produce summands with different signs in \eqref{eq4-2}. Moreover, if $a = x_1x_2^{-1}$, then
		
		\begin{align*}
			&t_{k-1} (h_1(\mathbfsl{x}_1),\dots,h_{l-1}(\mathbfsl{x}_{l-1}),u_{a}(\mathbfsl{x}_l),h_{l+1}(\mathbfsl{x}_{l+1}),\dots,h_m(\mathbfsl{x}_{m})) = 
			\\&= t_{k-1} (h_1(\mathbfsl{x}_1),\dots,h_{l-1}(\mathbfsl{x}_{l-1}),\mathbfsl{0}_{\FF^{k-1}_{q_l}},h_{l+1}(\mathbfsl{x}_{l+1}),\dots,h_m(\mathbfsl{x}_{m})) = 0, 
		\end{align*}
		since $t_{k-1}$ is $0$-absorbing in $[m]$. This implies that all the summands of $r_k$ are cancelling if $(\mathbfsl{x}_l)_2 \not= 0$. Thus $r_k(\mathbfsl{x}_1,\dots, \mathbfsl{x}_m) = \prod_{i =1}^mq_i \cdot t_{k}(\mathbfsl{x}_1,\dots, \mathbfsl{x}_m) = 0$ in this case.
		
		Subcase $(\mathbfsl{x}_1,\dots,\mathbfsl{x}_m) = (\lambda_1\mathbfsl{e}_1^{\FF^{k}_{q_1}},\dots, \lambda_m\mathbfsl{e}_1^{\FF^{k}_{q_m}})$ for some $(\lambda_1,\dots,\lambda_m) \in \prod_{i=1}^m \FF_{q_i}$:
		
		We can observe that:
		\begin{align*}
			&t_{k-1}  (h_1(\mathbfsl{x}_1),\dots,h_{l-1}(\mathbfsl{x}_{l-1}),h_l(\lambda_l\mathbfsl{e}_1^{\FF_{q_l}}),h_{l+1}(\mathbfsl{x}_{l+1}),\dots,h_m(\mathbfsl{x}_{m})) = 0
			\\&=t_{k-1}  (h_1(\mathbfsl{x}_1),\dots,h_{l-1}(\mathbfsl{x}_{l-1}),\mathbfsl{0}_{\FF^{k-1}_{q_l}},h_{l+1}(\mathbfsl{x}_{l+1}),\dots,h_m(\mathbfsl{x}_{m})) = 0, 
		\end{align*}
		for all $h_i \in P_i^{[k]}$ with $i \in [m]\backslash\{l\}$, for all $l \leq m$, $\lambda_l \in \FF_{q_l}$, $\mathbfsl{x}_i \in \FF^k_{q_i}$, and $h_{l} \in R^{[k]}_l$, since $t_{k-1}$ is $0$-absorbing in $[n]$. Thus we can observe that:
		\begin{equation*}\label{eq6}
			\begin{split}
				&r_k(\lambda_1\mathbfsl{e}_1^{\FF^k_{q_1}},\dots, \lambda_m\mathbfsl{e}_1^{\FF^k_{q_m}}) = 
				\\ =&\sum_{h_i \in P_i^{[k]}}  (-1)^{\sum_{i=1}^m \text{c}^{[k]}(h_i)}t_{k-1} (h_1(\lambda_1\mathbfsl{e}_1^{\FF^k_{q_1}}),\dots, h_m(\lambda_m\mathbfsl{e}_1^{\FF^k_{q_m}}))
				\\=& \sum_{h_i \in T_i^{[k]}}  (-1)^{\sum_{i=1}^m \text{c}^{[k]}(h_i)}t_{k-1} (h_1(\lambda_1\mathbfsl{e}_1^{\FF^k_{q_1}}),\dots, h_m(\lambda_m\mathbfsl{e}_1^{\FF^k_{q_m}}))
				\\=& \sum_{h_i \in T_i^{[k]}} t_{k-1} (h_1(\lambda_1\mathbfsl{e}_1^{\FF^k_{q_1}}),\dots, h_m(\lambda_m\mathbfsl{e}_1^{\FF^k_{q_m}}))
				\\=& \sum_{h_i \in T_i^{[k]}} t_{k-1} (\lambda_1\mathbfsl{e}_1^{\FF_{q_1}^{k-1}},\dots, \lambda_m\mathbfsl{e}_1^{\FF_{q_m}^{k-1}})
				\\=&\prod_{i=1}^mq_i\cdot t_{k-1} (\lambda_1\mathbfsl{e}_1^{\FF_{q_1}^{k-1}},\dots, \lambda_m\mathbfsl{e}_1^{\FF_{q_m}^{k-1}}) 
				\\ = & \prod_{i=1}^mq_i\cdot t_k(\lambda_1\mathbfsl{e}_1^{\FF^{k}_{q_1}},\dots, \lambda_m\mathbfsl{e}_1^{\FF^{k}_{q_m}}).
			\end{split}
		\end{equation*}
		Thus $r_k = \prod_{i =1}^mq_i \cdot t_{k}$.
		
		Because of \eqref{eq4-2} and the inductive hypothesis, we have $r_k \in \Cid(\{t_{k-1}\})$ $ \subseteq \Cid(C^{[1]})$. Thus $\prod_{i = 1}^mq_it_{k} \in \Cid(C^{[1]})$. Since $\prod_{i = 1}^mq_i \not= 0$ modulo $p$ we have that $t_{k} \in \Cid(C^{[1]})$ and this concludes the induction proof. 
	\end{proof}
	
	\begin{Lem}
		\label{Lem-sum-0-pres}
		Let $q_1,\dots,q_m$ and $p$ be powers of  primes with $\prod_{i=1}^mq_i$ and $p$ coprime and let $\KK = \prod_{i=1}^m\FF_{q_i}$. Let $C$ be an $(\FF_p,\KK)$-linearly closed clonoid, let $I \subseteq [m]$ and let $f \in C$ be $0$-absorbing in $I$. Then $f \in \Cid(C^{[1]})$.
	\end{Lem}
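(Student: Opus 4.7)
The plan is to reduce to the case $I=[m]$ (so that $f$ is $0$-absorbing in all its set-arguments) and then to express $f$ as a finite sum of ``line-supported'' functions, each of which will already be known to lie in $\Cid(C^{[1]})$ by the preceding two lemmas. First I would dispose of the case $\Dep(f)\subsetneq I$: for $i\in I\setminus\Dep(f)$, $f$ is independent of its $i$th set-argument yet vanishes whenever that argument is $0$, forcing $f\equiv 0$ and making the conclusion vacuous. Otherwise $\Dep(f)=I$, and after permuting the factors of $\KK$ (a right composition with a permutation matrix, covered by condition~(2) of Definition~\ref{DefClo-2}) one may assume $I=[h]$. Lemma~\ref{Rem-sum-0-pres2} then reduces the problem to showing $f|_{\KK_1}\in\Cid(C|_{\KK_1}^{[1]})$ with $\KK_1=\prod_{i=1}^h\FF_{q_i}$, where $f|_{\KK_1}$ is $0$-absorbing in all of its coordinates. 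Hence we may assume $I=[m]$ from now on.

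Let $s$ be the arity of $f$. For each choice of $\mathbfsl{b}=(\mathbfsl{b}_1,\dots,\mathbfsl{b}_m)\in\prod_{i=1}^m(\FF_{q_i}^s\setminus\{\mathbfsl{0}\})$, condition~(2) of Definition~\ref{DefClo-2} applied with $A_i=\mathbfsl{b}_i$ (viewed as an $s\times 1$ column) yields the unary function $g^{(\mathbfsl{b})}(\lambda_1,\dots,\lambda_m):=f(\lambda_1\mathbfsl{b}_1,\dots,\lambda_m\mathbfsl{b}_m)$ in $C^{[1]}$, which inherits $0$-absorption from $f$. Lemma~\ref{Lemt_k} applied to $g^{(\mathbfsl{b})}$ then produces a function $t_s^{(\mathbfsl{b})}\in\Cid(C^{[1]})$ that equals $g^{(\mathbfsl{b})}(\lambda_1,\dots,\lambda_m)$ at $(\lambda_1\mathbfsl{e}_1^{\FF^s_{q_1}},\dots,\lambda_m\mathbfsl{e}_1^{\FF^s_{q_m}})$ and vanishes off the standard coordinate lines. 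Applying Lemma~\ref{Lem1-2} with any invertible matrices $A_i\in\FF_{q_i}^{s\times s}$ sending $\mathbfsl{b}_i\mapsto \mathbfsl{e}_1^{\FF^s_{q_i}}$ transports the support to yield $\tilde t^{(\mathbfsl{b})}\in\Cid(C^{[1]})$ that equals $f(\lambda_1\mathbfsl{b}_1,\dots,\lambda_m\mathbfsl{b}_m)$ at $(\lambda_1\mathbfsl{b}_1,\dots,\lambda_m\mathbfsl{b}_m)$ and vanishes elsewhere. Now, for each $i$ I would fix a set $R_i\subseteq\FF_{q_i}^s\setminus\{\mathbfsl{0}\}$ of representatives of the one-dimensional subspaces of $\FF_{q_i}^s$, and claim that $f=\sum_{\mathbfsl{b}\in\prod_i R_i}\tilde t^{(\mathbfsl{b})}$: at a point $\mathbfsl{a}$ with every $\mathbfsl{a}_i\ne\mathbfsl{0}$, a unique $\mathbfsl{b}^*\in\prod_i R_i$ satisfies $\mathbfsl{a}_i\in\FF_{q_i}^\times\mathbfsl{b}_i^*$ for every $i$, and so only $\tilde t^{(\mathbfsl{b}^*)}$ contributes, giving $f(\mathbfsl{a})$; at a point $\mathbfsl{a}$ with some $\mathbfsl{a}_i=\mathbfsl{0}$, $f(\mathbfsl{a})=0$ and each summand $\tilde t^{(\mathbfsl{b})}(\mathbfsl{a})$ is either $0$ because $\mathbfsl{a}$ is off the $\mathbfsl{b}$-line or equals $f(\dots,0\cdot\mathbfsl{b}_i,\dots)=0$ by absorption. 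This gives $f\in\Cid(C^{[1]})$.

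The main obstacle lies entirely inside Lemma~\ref{Lemt_k}, which is where the hypothesis $\gcd(\prod_i q_i,p)=1$ is actually used; granted that tool, the present proof is just a combinatorial decomposition of the support of $f$ along the Cartesian product of the projective lines over the $\FF_{q_i}$, and the only delicate point above is to verify that the hypothesis $\Dep(f)=[h]$ needed for Lemma~\ref{Rem-sum-0-pres2} is indeed available after ruling out the trivial case $f\equiv 0$.
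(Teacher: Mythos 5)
Your proof is correct and follows essentially the same route as the paper's: reduce to $I=[m]$ via Lemma~\ref{Rem-sum-0-pres2}, decompose $f$ as a sum of functions each supported on a product of lines (one per element of the product of the projective spaces), and put each summand into $\Cid(C^{[1]})$ via Lemmata~\ref{Lemt_k} and~\ref{Lem1-2}. Your explicit disposal of the degenerate case $\Dep(f)\subsetneq I$ (which forces $f\equiv 0$ and is needed to invoke Lemma~\ref{Rem-sum-0-pres2} as stated) is a small point of extra care that the paper's proof leaves implicit.
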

	
	\begin{proof}
		
		Let $\KK_1 = \prod_{i \in I}\FF_{q_i}$ and let $C_1 := \{g \mid \exists g' \in C \colon g'\mid_{\KK_1}  = g\}$. By Lemma \ref{Rem-sum-0-pres2} $f \in \Cid(C^{[1]})$ if and only if $f \mid_{\KK_1} \in \Cid(C\mid_{\KK_1}^{[1]})$ and we observe that $f\mid_{\KK_1}$ is $0$-absorbing in $I$ . Thus without loss of generality we fix $I = [m]$. The strategy is to interpolate $f$ in all the distinct products of lines of the form $\{(\lambda_{1}\mathbfsl{b}_{1},\dots,\lambda_m\mathbfsl{b}_{m}) \mid  (\lambda_1,\dots,\lambda_m) \in \prod_{i = 1}^m\FF_{q_i}, \mathbfsl{b}_i \in \FF_{q_i}^n\backslash\{(0,\dots,0)\}$. To this end let  $R = \{L_j \mid 1 \leq j \leq \prod_{i = 1}^m(q_i^{n} - 1)/(q_i-1) = s\}$ be the set of all $s$ distinct products of lines of $\prod_{i = 1}^m\FF_{q_i}$ and let $\mathbfsl{l}_{(i,j)} \in \mathbb{F}_{q_i}^n$ be such that $(\mathbfsl{l}_{(1,j)},\dots,\mathbfsl{l}_{(m,j)})$ generates the products of $m$ lines $L_j$, for $1 \leq j \leq s$, $1 \leq i \leq m$. For all $1 \leq j \leq s$, let $f_{L_j}\colon \prod_{i=1}^m\mathbb{F}_{q_i}^n \to \mathbb{F}_p$ be defined by:
		
		\begin{equation*} f_{L_j}(\lambda_1\mathbfsl{l}_{(1,j)},\dots,\lambda_m\mathbfsl{l}_{(m,j)}) = f(\lambda_1\mathbfsl{l}_{(1,j)},\dots,\lambda_m\mathbfsl{l}_{(m,j)}) 
		\end{equation*}
		for $ (\lambda_1,\dots,\lambda_m) \in \prod_{i = 1}^m\FF_{q_i}$ and $f_{L_j}(\mathbfsl{x}) = 0$ for all $\mathbfsl{x} \in \prod_{i=1}^m\mathbb{F}^n_{q_i}\backslash \{(\lambda_1\mathbfsl{l}_{(1,j)},$ $\dots,\lambda_m\mathbfsl{l}_{(m,j)}) \mid  (\lambda_1,\dots,\lambda_m) \in \prod_{i = 1}^m\FF_{q_i}\}$.
		
		\textbf{Claim 1:}
		\begin{equation*}
			f= \sum_{j = 1}^{s} f_{L_j}.
		\end{equation*}
		Since $f$ is $0$-absorbing in $[m]$ we have that: 
		
		\begin{align*}
			\sum_{j = 1}^{s} f_{L_j}(\lambda_1\mathbfsl{l}_{(1,z)},\dots,\lambda_m\mathbfsl{l}_{(m,z)}) &= f_{L_z}(\lambda_1\mathbfsl{l}_{(1,z)},\dots,\lambda_m\mathbfsl{l}_{(m,z)})=
			\\&=f(\lambda_1\mathbfsl{l}_{(1,z)},\dots,\lambda_m\mathbfsl{l}_{(m,z)}) 
		\end{align*}
		for all $ (\lambda_1,\dots,\lambda_m) \in \prod_{i = 1}^m\FF_{q_i}$ and $z \in [s]$, since for all $j_1,j_2 \in [s]$, $L_{j_1}$ and $L_{j_2}$ intersect only in points of the form $(\mathbfsl{x}_1,\dots,\mathbfsl{x}_m)$ $ \in \prod_{i = 1}^m\FF_{q_i}^n$ with $\mathbfsl{x}_i = (0,\dots,0)$ for some $i \in [m]$. 
		
		Let $1 \leq j \leq s$ and let $g\colon \prod_{i=1}^m\mathbb{F}_{q_i} \to \mathbb{F}_p$ be a function such that:
		\begin{equation*}
			f_{L_j}(\lambda_1\mathbfsl{l}_{(1,j)},\dots,\lambda_m\mathbfsl{l}_{(m,j)}) = g(\lambda_1,\dots,\lambda_m) = f(\lambda_1\mathbfsl{l}_{(1,j)},\dots,\lambda_m\mathbfsl{l}_{(m,j)})
		\end{equation*}
		for all $ (\lambda_1,\dots,\lambda_m) \in \prod_{i = 1}^m\FF_{q_i}$.  Then $g \in C^{[1]}$.
		
		\textbf{Claim 2:} $f_{L_j} \in \Cid(C^{[1]})$ for all $L_j \in R$. 
		
		We can observe that $f_{L_j}(\lambda_1\mathbfsl{l}_{(1,j)},\dots, \lambda_m\mathbfsl{l}_{(m,j)}) = g(\lambda_1,\dots,\lambda_m)$ for all $(\lambda_1,\dots,$ $\lambda_m) \in \prod_{i=1}^m\mathbb{F}_{q_i}$, and $f_{L_j}(\mathbfsl{x}_1,\dots,\mathbfsl{x}_m)  = 0$ for all $(\mathbfsl{x}_1,\dots,\mathbfsl{x}_m) \in \prod_{i=1}^m\mathbb{F}_{q_i}^n \backslash \{(\lambda_1\mathbfsl{l}_{(1,j)},$ $\dots, \lambda_m\mathbfsl{l}_{(m,j)})\mid (\lambda_1,\dots,\lambda_m) \in \prod_{i=1}^m\mathbb{F}_{q_i} \}$. Furthermore, $g$ is $0$-absorbing in $[m]$. By Lemmata \ref{Lem1-2} and \ref{Lemt_k}, $f_{L_j} \in \Cid(C^{[1]})$, which concludes the proof of $f \in \Cid(C^{[1]})$. 
	\end{proof}
	
	We are now ready to prove that an $(\mathbb{F},\mathbb{K})$-linearly closed clonoid $C$ is generated by its unary part.
	
	\begin{Thm}
		\label{Th1-2}	
		Let $q_1,\dots,q_m$ and $p$ be powers of  primes with $\prod_{i=1}^mq_i$ and $p$ coprime and let $\KK = \prod_{i=1}^m\FF_{q_i}$. Then every $(\FF_p,\KK)$-linearly closed clonoid $C$ is generated by its unary functions. Thus $C = \Cid(C^{[1]})$.
	\end{Thm}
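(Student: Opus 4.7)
The strategy is to reduce an arbitrary function in $C^{[n]}$ to a sum of $0$-absorbing pieces, each of which lies again in $C^{[n]}$, and then apply Lemma \ref{Lem-sum-0-pres}. Concretely, given $f \in C^{[n]}$, I would invoke Lemma \ref{Lem0absor} with $A_j = \KK_j^n$ and the target abelian group $(\FF_p, +)$, to obtain a decomposition $f = \sum_{I \subseteq [m]} f_I$ in which each $f_I$ is $0$-absorbing in $I$. The same lemma guarantees that every $f_I$ lies in the additive subgroup generated by the ``zeroing'' functions $\mathbfsl{x} \mapsto f(\mathbfsl{x}^{(J)})$, $J \subseteq [m]$.

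The next step is to observe that each such zeroing function itself belongs to $C^{[n]}$. This is immediate from closure property (2) of Definition \ref{DefClo-2}: for a fixed $J \subseteq [m]$, take $l = n$ and set $A_j = I_n$ for $j \in J$ and $A_j = 0_{n \times n}$ for $j \in [m]\setminus J$. The resulting function is precisely $\mathbfsl{x} \mapsto f(\mathbfsl{x}^{(J)})$. Closure property (1) (with $\mathbfsl{a} = \mathbfsl{b} = \mathbfsl{1}$, and iterated to obtain arbitrary integer combinations) then yields that every element of the additive subgroup generated by these zeroing functions is still in $C^{[n]}$; in particular $f_I \in C^{[n]} \subseteq C$ for every $I \subseteq [m]$.

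Once we know each $f_I$ is a member of $C$ that is $0$-absorbing in $I$, Lemma \ref{Lem-sum-0-pres} applies (the coprimality hypothesis on $\prod_i q_i$ and $p$ is already in force) and gives $f_I \in \Cid(C^{[1]})$. Since $\Cid(C^{[1]})$ is closed under sums (again by property (1)), we conclude
\begin{equation*}
f \;=\; \sum_{I \subseteq [m]} f_I \;\in\; \Cid(C^{[1]}).
\end{equation*}
As $f \in C^{[n]}$ was arbitrary and the reverse inclusion $\Cid(C^{[1]}) \subseteq C$ is trivial, we obtain $C = \Cid(C^{[1]})$.

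The only potential subtlety is the check in step two that the coordinate-zeroing maps are realized by the matrix substitutions allowed in the definition of an $(\FF,\KK)$-linearly closed clonoid; but this is immediate with the choice $A_j \in \{I_n, 0_{n\times n}\}$. All the real work has already been done in Lemma \ref{Lemt_k} and Lemma \ref{Lem-sum-0-pres}, so the present theorem is essentially an assembly argument.
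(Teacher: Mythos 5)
Your proposal is correct and follows the same route as the paper's proof: decompose $f$ via Lemma \ref{Lem0absor} into $0$-absorbing components, verify each component stays in $C$, and apply Lemma \ref{Lem-sum-0-pres}. The only difference is that you spell out the matrix substitution ($A_j \in \{I_n, 0_{n\times n}\}$) showing the zeroing functions lie in $C$, a detail the paper leaves implicit.
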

	
	\begin{proof}
		The inclusion $\supseteq$ is obvious. For the other inclusion let $C$ be an $(\FF_p,\KK)$-linearly closed clonoid and let $f$ be an $n$-ary function in $C$. By Lemma \ref{Lem0absor} with $A_i = \FF_{q_i}^n$ and $0_{A_i} = (0_{\FF_{q_i}},\dots,0_{\FF_{q_i}})$, $f$ can be split in the sum of $n$-ary functions $\sum_{I \subseteq [m]}f_I$ such that for each $I \subseteq [m]$, $f_I$ is $0$-absorbing in $I$. Furthermore, each function $f_I$ lies in the subgroup $\mathbf{F}$ of $\mathbb{F}_p^{\KK^n}$ that is generated by the functions $\mathbfsl{x} \rightarrow f(\mathbfsl{x}^{(I)})$, where $I\subseteq [m]$ and thus each summand $f_I$ is in $C$. By Lemma \ref{Lem-sum-0-pres} each of these summands is in $\Cid(C^{[1]})$. and thus $f \in  \Cid(C^{[1]})$.
	\end{proof}
	
	The next corollary of Theorem \ref{Th1-2} and the following theorem tell us that there are only finitely many distinct $(\mathbb{F},\mathbb{K})$-linearly closed clonoids.
	
	\begin{Cor}
		\label{Cor2-2}
		Let $q_1,\dots,q_m$ and $p$ be powers of  primes with $\prod_{i=1}^mq_i$ and $p$ coprime and let $\KK=\prod_{i=1}^m\FF_{q_i}$. Let $C$ and $D$ be two $(\FF_p,\KK)$-linearly closed clonoids. Then $C = D$ if and only if $C^{[1]} = D^{[1]}$.
	\end{Cor}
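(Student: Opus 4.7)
The plan is to observe that this is an immediate consequence of Theorem \ref{Th1-2}, which states that every $(\FF_p,\KK)$-linearly closed clonoid is generated by its unary part. The forward direction is trivial: if $C = D$ as sets of finitary functions, then in particular their intersections with the unary functions agree, so $C^{[1]} = D^{[1]}$.

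For the converse direction, I would assume $C^{[1]} = D^{[1]}$ and apply Theorem \ref{Th1-2} to both $C$ and $D$. This gives $C = \Cid(C^{[1]})$ and $D = \Cid(D^{[1]})$, where $\Cid$ denotes the $(\FF_p,\KK)$-linearly closed clonoid generated by a set. Since $C^{[1]} = D^{[1]}$, the two generated clonoids coincide, yielding $C = D$.

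There is essentially no obstacle here; the corollary is a packaging of Theorem \ref{Th1-2}. The only thing to note is that the hypothesis that $\prod_{i=1}^m q_i$ and $p$ are coprime is needed only because it is a hypothesis of Theorem \ref{Th1-2} (it was used in Lemma \ref{Lemt_k} to invert $\prod_{i=1}^m q_i$ modulo $p$), and we inherit it directly.
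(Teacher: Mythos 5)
Your proof is correct and matches the paper's intent exactly: the paper states this as an immediate corollary of Theorem \ref{Th1-2}, and your argument $C = \Cid(C^{[1]}) = \Cid(D^{[1]}) = D$ is precisely the packaging the paper has in mind. Your remark about where the coprimality hypothesis enters is also accurate.
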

	
	Let us denote by $\mathcal{L}(\FF,\KK)$ the lattice of all $(\FF,\KK)$-linearly closed clonoids. We define the functions $\rho_i\colon \mathcal{L}(\FF,\KK) \rightarrow \mathcal{L}(\FF_{p_i},\KK)$ such that for all $1 \leq i \leq s$ and for all $C \in \mathcal{L}(\FF,\KK)$:
	
	\begin{equation}
		\label{defiso}
		\rho_i(C) := \{f \mid \text{ there exists } g \in C \colon f = \pi_i^{\FF} \circ g\},
	\end{equation}
	where with $\pi_i^{\FF}$ we denote the projection over the $i$-th component of the product of fields $\FF$.
	
	\begin{Thm}
		\label{ThmDirProd}
		Let $\mathbb{F} = \prod_{i =1}^s\FF_{p_i}$ and $\mathbb{K} = \prod_{i =1}^m\FF_{q_i}$ be products of finite fields. Then the lattice of all $(\FF,\KK)$-linearly closed clonoids is isomorphic to the direct product of the lattices of all $(\FF_{p_i},\KK)$-linearly closed clonoids with $1 \leq i \leq s$.
	\end{Thm}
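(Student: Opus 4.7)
The plan is to exhibit the natural map $\Phi\colon \mathcal{L}(\FF,\KK) \to \prod_{i=1}^s \mathcal{L}(\FF_{p_i}, \KK)$ given by $\Phi(C) = (\rho_1(C),\dots,\rho_s(C))$ as a lattice isomorphism. The engine of every step is the primitive idempotent $e_i \in \FF$ having $1$ in position $i$ and $0$ elsewhere: viewed as a constant vector $\mathbfsl{e}_i = (e_i,\dots,e_i) \in \FF^n$, the Hadamard product $\mathbfsl{e}_i f$ kills every component of $f$ except the $i$-th, and every $f\colon \KK^n\to \FF$ decomposes as $f = \sum_{i=1}^s \mathbfsl{e}_i f$, since $e_1+\cdots+e_s=1_{\FF}$.

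First I would verify that $\rho_i(C)$ really belongs to $\mathcal{L}(\FF_{p_i},\KK)$. Because addition and the Hadamard product in $\FF$ act componentwise and composition from the right with matrices $A_j$ only touches the source, both axioms of Definition \ref{DefClo-2} push through $\pi_i$; conversely, any combination produced on the $\FF_{p_i}$-side can be lifted to $\FF$ by padding the other coordinates with $0$, so $\rho_i$ is well defined and $\Phi$ is order-preserving.

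For bijectivity I would handle both directions with the same idempotent trick. If $\rho_i(C) = \rho_i(D)$ for every $i$ and $f \in C^{[n]}$, then for each $i$ there exists $g_i \in D$ with $\pi_i \circ g_i = \pi_i \circ f$; the Hadamard axiom gives $\mathbfsl{e}_i g_i \in D$, and a componentwise check yields $f = \sum_{i=1}^s \mathbfsl{e}_i g_i \in D$, proving injectivity and, applied to inclusions, showing that the inverse relation is order-preserving. For surjectivity, given $(C_1,\dots,C_s) \in \prod_i \mathcal{L}(\FF_{p_i},\KK)$, I would set
\begin{equation*}
C := \{f \colon \KK^k \to \FF \mid \pi_i \circ f \in C_i \text{ for all } 1\leq i\leq s,\ k \in \NN\};
\end{equation*}
componentwise reasoning again shows $C \in \mathcal{L}(\FF,\KK)$, and for each $h \in C_i$ the lift $\mathbfsl{e}_i \tilde h$, where $\tilde h$ extends $h$ by zero in the other coordinates, lies in $C$ and projects to $h$, so $\rho_i(C) = C_i$.

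I do not expect any genuine obstacle: every step reduces, via the central idempotents $e_1,\dots,e_s$, to an immediate componentwise verification of the clonoid axioms. The only bookkeeping point is that every $(\FF_{p_j},\KK)$-linearly closed clonoid contains the zero function of every arity (needed to justify the zero-padding in the surjectivity step), which follows from non-emptiness of $C_j$ together with closure under $\mathbfsl{a}f+\mathbfsl{b}g$ with $\mathbfsl{a}=\mathbfsl{b}=\mathbfsl{0}$ and the axiom allowing a change of arity.
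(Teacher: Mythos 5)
Your proposal is correct and follows essentially the same route as the paper: the paper also uses the map $\rho = (\rho_1,\dots,\rho_s)$, constructs the inverse $\psi(C_1,\dots,C_s)$ as exactly the set of functions whose $i$-th projections lie in $C_i$, and proves $\psi\rho(C)\subseteq C$ by writing $f=\sum_{i=1}^s \mathbfsl{a}_i g_i$ with the same central idempotents you call $\mathbfsl{e}_i$. The only cosmetic difference is that you phrase the key step as injectivity of $\Phi$ rather than as the identity $\psi\rho(C)=C$, which is an equivalent formulation.
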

	
	\begin{proof}
		
		Let us define the function $\rho\colon \mathcal{L}(\FF,\KK) \rightarrow \prod_{i=1}^s\mathcal{L}(\FF_{p_i},\KK)$ such that $\rho(C) := (\rho_1(C),\dots,\rho_s(C))$. Clearly $\rho$ is well-defined. Conversely, let $\psi\colon $ $\prod_{i=1}^s\mathcal{L}$ $(\FF_{p_i},\KK) \rightarrow \mathcal{L}(\FF,\KK)$ be defined by:
		\begin{equation*}
			\psi(C_1,\dots,C_s) = \bigcup_{k \in \NN}\{f \colon\mathbfsl{x} \mapsto (f_1(\mathbfsl{x}),\dots,f_s(\mathbfsl{x}))\mid f_1\in C_1^{[k]}, \dots, f_s\in C_s^{[k]}\}.
		\end{equation*}
		From this definition it is clear that $\psi$ is well defined. Furthermore,
		\begin{equation*}
			\rho\psi(C_1,\dots,C_s) = (C_1,\dots,C_s)
		\end{equation*}
		and $C\subseteq\psi\rho(C)$ for all $(C_1,\dots,C_s) \in \prod_{i=1}^s\mathcal{L}(\FF_{p_i},\KK)$ and $C \in \mathcal{L}(\FF,\KK)$.
		
		To prove that $C \supseteq \psi\rho(C)$  let $f\in \psi \rho (C)$. Then there exists $(f_1,\dots,f_s) \in \rho (C)$ such that $f_i = \pi_i^{\FF} \circ f$ for all $i \in [s]$. By definition of $\rho$, there exist $g_1,\dots,g_s \in C$ such that $f_i = \pi_i^{\FF} \circ g_i$ for all $i \in [s]$. Let $\mathbfsl{a}_i  \in  \FF$ be such that $\mathbfsl{a}_i(j) = 0$ for $j \not=  i$ and $\mathbfsl{a}_i(i) = 1$. It is easy to check that the function $f =\sum_{i=1}^s \mathbfsl{a}_ig_i =f$ and thus $f \in C$. 
		
		Hence $\rho$ is a lattice isomorphism.
		
	\end{proof}
	
	\begin{proof}[Proof of Theorem \ref{Thm14-2}]
		Let $\FF= \prod_{i=1}^s\FF_{p_i}$ and $\KK = \prod_{i=1}^m\FF_{q_i}$ be products of finite with $|\KK|$ and $|\FF|$ coprime. Let $C \in \mathcal{L}(\FF,\KK)$. By  Theorem \ref{ThmDirProd}  $C$ is uniquely determined by its projections $C_1 = \rho_1(C),\dots, C_s = \rho_s(C)$ where $\rho_i$ is defined in 	\eqref{defiso}. By Theorem \ref{Th1-2} we have that for all $i \in  [s]$ every $(\FF_{p_i},\KK)$-linearly closed clonoid $C_i$ is uniquely determined by its unary part $C_i^{[1]}$. Thus $C$ is uniquely determined by its unary part $C^{[1]}$.
	\end{proof}
	
	\section{The lattice of all $(\mathbb{F},\mathbb{K})$-linearly closed clonoids}\label{TheLattice-2}
	
	In this section we characterize the structure of the lattice $\mathcal{L}(\FF,\KK)$ of all $(\mathbb{F},\mathbb{K})$-linearly closed clonoids through a description of their unary parts. Let $\mathbb{F} = \prod_{i=1}^s\FF_{p_i}$ and $\mathbb{K} = \prod_{j=1}^m\FF_{q_j}$ be products of finite fields such that $|\KK|$ and $|\FF|$ are coprime numbers. 
	
	We will see that $\mathcal{L}(\FF,\KK)$ is isomorphic to the product of the lattices of all $\FF_{p_i}[\KK^{\times}]$-submodules of $\mathbb{F}_{p_i}^{\KK}$, where $\KK^{\times}$ is the multiplicative monoid of $\KK = \prod_{i=1}^m \FF_{q_i}$.
	In order to characterize the lattice of all $(\FF, \KK)$-linearly closed clonoids we need the definition of  \emph{monoid ring}.
	
	\begin{Def} Let $\langle M, \cdot\rangle$ be a commutative monoid and let $\langle R, +, \odot\rangle$ be a commutative ring with identity. Let
		\begin{equation*}
			S := \{f \in R^M \mid f(a) \not= 0 \text{ for only finitely many } a \in M\}.
		\end{equation*}
	\end{Def}
	
	We define the \emph{monoid ring} of $M$ over $R$ as the ring $(S, +, \cdot)$, where $+$ is the point-wise addition of functions and the multiplication is defined as $f\cdot g : R \rightarrow M $ which maps each $m \in M$ into:
	\begin{equation*}
		\sum_{m_1,m_2 \in M,m_1m_2=m}f(m_1)g(m_2).
	\end{equation*}
	We denote by $R[M]$ the monoid ring of $M$ over $R$. Following the notation in  \cite{AM.CWTR} for all $a \in A$ we define $\tau_a$ to be the element of $R^M$ with $\tau_a(a) = 1$ and $\tau_a (M\backslash\{a\}) = \{0\}$. We observe that for all $f \in R[M]$ there is an $\mathbfsl{r} \in R^M$ such that $f = \sum_{a\in M} r_a\tau_a $ and that we can multiply such expressions with the rule $\tau_a \cdot \tau_b = \tau_{ab }$. 
	
	\begin{Def}\label{DefAct2}
		Let $M$ be a commutative monoid and let $R$ be a commutative ring. We denote by $R^M$ the $R[M]$-module with the action $\ast$ defined by:
		\begin{equation*}
			(\tau_{a} \ast  f)(x) = f(ax),
		\end{equation*}
		for all $a \in M$ and $f \in R^M$.
	\end{Def}
	
	Let $\KK^{\times}$ be the multiplicative monoid of $\KK = \prod_{i=1}^m \FF_{q_i}$. We can observe that $V$ is an $\FF_p[\KK^{\times}]$-submodule of $\mathbb{F}_p^{\KK}$ if and only if it is a subspace of $\mathbb{F}_p^{\KK}$ satisfying
	
	\begin{equation}
		\label{neweq12}
		(x_1,\dots,x_m)  \mapsto f(a_1x_1,\dots,a_mx_m) \in V,
	\end{equation}
	for all $f  \in V$ and $(a_1,\dots,a_m) \in \prod_{i=1}^m\FF_{q_i}$. Clearly the following lemma holds.
	
	\begin{Lem}
		Let $p, q_1,\dots q_m$ be powers of primes and let $\KK = \prod_{i =1}^m \FF_{q_i}$. Let  $V \subseteq \mathbb{F}_p^{\KK}$. Then $V$ is the unary part of an $(\mathbb{F}_p , \KK)$-linearly closed clonoid if and only if is an $\FF_p[\KK^{\times}]$-submodule of $\mathbb{F}_p^{\KK}$.
	\end{Lem}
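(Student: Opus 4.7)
The statement is essentially a matter of unpacking definitions, so my plan is to treat the two directions separately and produce a concrete construction for the nontrivial direction.

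For the forward direction, I would assume $V = C^{[1]}$ for some $(\FF_p,\KK)$-linearly closed clonoid $C$ and simply specialize the two clonoid axioms to the unary case. Condition (1) of Definition \ref{DefClo-2} with $n=1$ and $\mathbfsl{a},\mathbfsl{b}\in \FF_p$ says exactly that $V$ is closed under $\FF_p$-linear combinations, i.e.\ it is an $\FF_p$-subspace of $\FF_p^{\KK}$. Condition (2) with $n=l=1$ means $A_j\in\KK_j^{1\times 1}=\KK_j$ is a scalar $a_j$, so the axiom requires $(x_1,\dots,x_m)\mapsto f(a_1x_1,\dots,a_mx_m)$ to belong to $V$ for every $(a_1,\dots,a_m)\in\KK$. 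By Definition \ref{DefAct2}, this is precisely $\tau_{(a_1,\dots,a_m)}\ast f\in V$. Combined with $\FF_p$-linear closure, this gives closure under the action of $\FF_p[\KK^{\times}]$, so $V$ is an $\FF_p[\KK^{\times}]$-submodule, which is the characterization in \eqref{neweq12}.

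For the backward direction, I need to exhibit an $(\FF_p,\KK)$-linearly closed clonoid $C$ whose unary part is exactly $V$. The natural candidate is
\begin{equation*}
C^{[n]} := \operatorname{span}_{\FF_p}\Bigl\{\mathbfsl{x}\mapsto v(L_1(\mathbfsl{x}_1),\dots,L_m(\mathbfsl{x}_m))\;\Bigm|\;v\in V,\ L_j\colon\KK_j^n\to\KK_j\text{ is }\KK_j\text{-linear}\Bigr\}.
\end{equation*}
Closure under axiom (1) is built into the definition as $\FF_p$-linear span. Closure under axiom (2) is a routine composition check: if $f(\mathbfsl{x})=v(L_1(\mathbfsl{x}_1),\dots,L_m(\mathbfsl{x}_m))$ and we substitute $\mathbfsl{x}_j=A_j\mathbfsl{y}_j^t$, the resulting generator is of the same form with the $\KK_j$-linear map $L_j\circ A_j$ replacing $L_j$; extending $\FF_p$-linearly gives closure on all of $C^{[n]}$.

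For the identification $C^{[1]}=V$, the inclusion $V\subseteq C^{[1]}$ is immediate (take $L_j=\Id$ and a single summand). For the reverse inclusion, when $n=1$ every $\KK_j$-linear map $L_j\colon\KK_j\to\KK_j$ is multiplication by some $a_j\in\KK_j$, so every generator of $C^{[1]}$ has the form $x\mapsto v(a_1x_1,\dots,a_mx_m)=(\tau_{(a_1,\dots,a_m)}\ast v)(x)$, which lies in $V$ by the $\FF_p[\KK^{\times}]$-module hypothesis; closing under $\FF_p$-linear combinations keeps us inside $V$. No serious obstacle is anticipated: the only thing to be slightly careful about is matching the matrix-formulation in Definition \ref{DefClo-2}(2) with the $\KK_j$-linear-functional formulation used in the definition of $C^{[n]}$, which is a direct translation.
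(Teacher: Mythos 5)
Your proof is correct and matches the paper's intent: the paper states this lemma without proof, having just observed that being an $\FF_p[\KK^{\times}]$-submodule is equivalent to being a subspace closed under the maps in \eqref{neweq12}, which is exactly the definitional unpacking you carry out (axiom (1) with $n=1$ gives the subspace condition, axiom (2) with $n=l=1$ gives the monoid action). Your explicit construction of a clonoid with prescribed unary part for the converse is a correct and slightly more detailed account of what the paper leaves as clear.
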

	
	Together with Theorem \ref{ThmDirProd} this immediately yields the following.
	
	\begin{Cor}
		\label{CorModuleIso2}
		Let $\KK = \prod_{i =1}^m \FF_{q_i}$ and $\FF = \prod_{i =1}^s \FF_{p_i}$ be products of finite fields such that $|\KK|$ and $|\FF|$ are coprime. Then the function $\pi^{[1]}$ that sends an $(\FF , \KK)$-linearly closed clonoid to its unary part is an isomorphism between the lattice of all $(\FF, \KK)$-linearly closed clonoids and the direct product of the lattices of  all $\FF_{p_i}[\KK^{\times}]$-submodules of $\mathbb{F}_{p_i}^{\KK}$.
	\end{Cor}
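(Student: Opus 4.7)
The plan is to assemble this from three pieces already in the paper: Theorem \ref{ThmDirProd}, which gives a lattice isomorphism $\rho\colon \mathcal{L}(\FF,\KK) \to \prod_{i=1}^s\mathcal{L}(\FF_{p_i},\KK)$; Corollary \ref{Cor2-2}, which tells us that within each factor an $(\FF_{p_i},\KK)$-linearly closed clonoid is determined by its unary part; and the preceding lemma, which identifies the unary parts of $(\FF_{p_i},\KK)$-linearly closed clonoids with the $\FF_{p_i}[\KK^{\times}]$-submodules of $\FF_{p_i}^{\KK}$. The desired map $\pi^{[1]}$ factors as $\rho$ followed by taking the unary part in each coordinate, so it suffices to show that for each fixed $i$ the assignment $\pi_i^{[1]}\colon C\mapsto C^{[1]}$ is a lattice isomorphism between $\mathcal{L}(\FF_{p_i},\KK)$ and the lattice $\mathcal{M}_i$ of $\FF_{p_i}[\KK^{\times}]$-submodules of $\FF_{p_i}^{\KK}$.

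First I would check that $\pi_i^{[1]}$ is a bijection. Well-definedness of its image is the easy direction of the preceding lemma. Injectivity is precisely Corollary \ref{Cor2-2}. For surjectivity, given a submodule $V\in\mathcal{M}_i$, the lemma says $V$ is the unary part of some $(\FF_{p_i},\KK)$-linearly closed clonoid $C$; by Theorem \ref{Th1-2}, $\Cid(V)=\Cid(C^{[1]})=C$, so $\Cid(V)^{[1]}=V$, exhibiting $V$ as the image of $\Cid(V)$.

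Next I would verify that $\pi_i^{[1]}$ preserves meets and joins. Meets are trivial: $(C\wedge D)^{[1]}=(C\cap D)^{[1]}=C^{[1]}\cap D^{[1]}$. For joins, the inequality $\Cid(C\cup D)^{[1]}\supseteq C^{[1]}\cup D^{[1]}$ is clear, and since $\Cid(C\cup D)^{[1]}$ is itself an $\FF_{p_i}[\KK^{\times}]$-submodule it contains the submodule $\langle C^{[1]}\cup D^{[1]}\rangle$; conversely, $\Cid(\langle C^{[1]}\cup D^{[1]}\rangle)$ is an $(\FF_{p_i},\KK)$-linearly closed clonoid containing $C^{[1]}\cup D^{[1]}$, hence (by Theorem \ref{Th1-2} applied to each of $C$ and $D$) it contains $C\cup D$ and therefore $\Cid(C\cup D)$, so taking unary parts gives the reverse inclusion. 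Composing the product of the maps $\pi_i^{[1]}$ with $\rho$ yields the stated isomorphism $\pi^{[1]}$.

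There is no real obstacle here; the content of the statement has essentially been distilled into the lemmas it invokes, so the proof is a bookkeeping argument that collects the isomorphism factorwise and verifies that the factorwise bijection is order-preserving in both directions.
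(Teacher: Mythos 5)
Your argument is correct and follows exactly the route the paper intends: the paper states the corollary as an immediate consequence of the preceding lemma together with Theorem \ref{ThmDirProd}, with injectivity and surjectivity of the factorwise map $C\mapsto C^{[1]}$ supplied by Corollary \ref{Cor2-2} and Theorem \ref{Th1-2}, which is precisely what you spell out. The only difference is that you make explicit the routine verification of the lattice operations that the paper leaves to the reader.
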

	
	With the same strategy of \cite[Lemma $5.6$]{Fio.CSOF} we obtain the following Lemma.
	
	\begin{Lem}
		Let $\KK = \prod_{i =1}^m \FF_{q_i}$ and $\FF = \prod_{i =1}^s \FF_{p_i}$ be products of finite fields such that $|\KK|$ and $|\FF|$ are coprime. Then every $(\FF , \KK)$-linearly closed clonoid is finitely related.
	\end{Lem}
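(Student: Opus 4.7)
The plan is to follow the strategy indicated by the reference to [Fio.CSOF, Lemma 5.6]. First, using Theorem \ref{ThmDirProd}, I would reduce to the case where the target is a single finite field $\FF_p$: if each factor $\rho_i(C)$ in the direct-product decomposition is defined by finitely many relations, then the disjoint union of these finite families witnesses finite relatedness for $C$ itself.

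Fix $C \in \mathcal{L}(\FF_p,\KK)$. By Corollary \ref{CorModuleIso2}, the unary part $C^{[1]}$ is an $\FF_p[\KK^{\times}]$-submodule of the finite vector space $\FF_p^{\KK}$, and by Theorem \ref{Th1-2} we have $C = \Cid(C^{[1]})$. Since $\FF_p^{\KK}$ has finite $\FF_p$-dimension, $C^{[1]}$ is cut out by finitely many $\FF_p$-linear functionals $\phi_1,\dots,\phi_N\colon \FF_p^{\KK} \to \FF_p$, each of the form $\phi_j(g) = \sum_{a \in \KK} c_{j,a}\, g(a)$ with coefficients $c_{j,a} \in \FF_p$.

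Each functional $\phi_j$ then translates into a relational constraint on the clonoid: I would attach to it the relation on $\KK$ whose coordinates enumerate the elements of $\KK$ reached via $\KK$-linear substitutions $\KK \to \KK^k$, together with the linear predicate $\sum_a c_{j,a}\, y_a = 0$ on the $\FF_p$-outputs. A $k$-ary function $f\colon \KK^k \to \FF_p$ lies in $C$ precisely when every unary function obtained by composing $f$ on the right with a $\KK$-linear substitution belongs to $C^{[1]}$, equivalently when all such restrictions satisfy the $N$ tests $\phi_j = 0$. The forward implication is immediate from clause (2) of Definition \ref{DefClo-2}, and the reverse is the content of Theorem \ref{Th1-2}.

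The main obstacle will be the bookkeeping that converts the $N$ linear functionals into a finite family of relations of bounded arity in the precise sense of ``finitely related'' used in [Fio.CSOF, Lemma 5.6]. In the single-field source case treated there, this reduces to a straightforward linear-algebra argument; the only new feature in the present product-of-fields source $\KK = \prod_{j=1}^m \FF_{q_j}$ is that the multiplicative monoid $\KK^{\times}$ acts as a product of the $\FF_{q_j}^{\times}$-actions in the sense of Definition \ref{DefAct2}, but the same collection of test functionals applies coordinatewise and the argument of [Fio.CSOF] carries through.
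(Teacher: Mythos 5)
Your proposal is correct and follows essentially the strategy the paper itself invokes (the paper gives no proof of this lemma, only the remark that the argument of \cite[Lemma $5.6$]{Fio.CSOF} carries over): reduce to a single target field via Theorem \ref{ThmDirProd}, use the finite $\FF_p$-dimension of $\FF_p^{\KK}$ to cut out $C^{[1]}$ by finitely many linear functionals, and encode membership as the condition that all unary linear minors satisfy these functionals. The one small imprecision is that the reverse implication of your membership test is not literally Theorem \ref{Th1-2} but rather Corollary \ref{Cor2-2}, applied to the $(\FF_p,\KK)$-linearly closed clonoid of all functions whose unary linear minors lie in $C^{[1]}$, which has the same unary part as $C$.
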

	
	The next step is to characterize the lattice of all $\FF_p[\KK^{\times}]$-submodules of $\mathbb{F}_p^{\KK}$. To this end we observe that $V$ is an $\FF_p[\KK^{\times}]$-submodule of $\mathbb{F}_p^{\KK}$ if and only if is a subspace of $\mathbb{F}_p^{\KK}$ satisfying \eqref{neweq12}.
	
	We can provide a bound for the lattice of all $(\FF,\KK)$-linearly closed clonoids given by the number of subspaces of  $\FF_{p_i}^{\KK}$.
	
	\begin{Rem}
		It is a well-known fact in linear algebra that the number of $k$-dimensional subspaces of an $n$-dimensional vector space $V$ over a finite field $\FF_q$ is the Gaussian binomial coefficient:
		
		\begin{equation}
			{{n}\choose{k}}_q = \prod_{i=1}^k \frac{q^{n-k+i}-1}{q^i-1}.
		\end{equation}
		
	\end{Rem}
	
	From this remark we directly obtain the bound of  Theorem \ref{Corfinale-3}. In order to determine the exact cardinality of the lattice of all $(\FF,\KK)$-linearly closed clonoids we have to deal with the problem to find the  $\FF_p[\KK^{\times}]$-submodules of $\mathbb{F}_p^{\KK}$. We will not study this problem here because we think that this is an interesting problem that deserves an own research.

	\section*{Acknowledgements}
	
	The author thanks Erhard Aichinger, who inspired this paper, Erkko Lehtonen who reviewed my Ph. D. thesis, and Sebastian Kreinecker for many hours of fruitful discussions. The author thanks the referees for their useful suggestions.

\end{document}